\documentclass[11pt]{amsart}
\usepackage[utf8]{inputenc}
\usepackage[left=2cm, right=2cm, top=2cm, bottom=2cm]{geometry}
\usepackage{mdframed, mathtools, esint, amsmath, enumitem, amscd, amssymb, indentfirst, latexsym, multicol, verbatim, mdframed, amsthm, esint}
\usepackage[ddmmyyyy]{datetime}
\usepackage[dvipsnames,table,xcdraw]{xcolor}
\usepackage[T1]{fontenc}
\usepackage[sort, numbers]{natbib}
\usepackage{ulem}
\usepackage{tikz}
\usetikzlibrary{arrows.meta, positioning}

\newcommand{\R}{\mathbb{R}}

\newcommand*\dd{\mathop{}\!\mathrm{d}}

\renewcommand{\mapsto}{\longmapsto}

\renewcommand{\epsilon}{\varepsilon}

\newlist{steps}{enumerate}{1}
\setlist[steps, 1]{label = Step \arabic*:}

\newtheorem{theo}{Theorem}[section]
\newtheorem{prop}[theo]{Proposition}
\newtheorem{lem}[theo]{Lemma}
\newtheorem{rem}[theo]{Remark}

\newtheorem{defi}[theo]{Definition}
\usepackage[utf8]{inputenc}

\newcommand{\sgn}{\mathrm{sgn}}

\def \n {\nabla}

\def \O {\Omega}

\def \cD {\mathcal{D}}
\def \cE {\mathcal{E}}

\def \p {\partial}

\def \dd  {\, \mathrm{d}}

\def \dm  {\, \mathrm{d}m}

\def \dt  {\, \mathrm{d}t}

\def \dx  {\, \mathrm{d}x}
\def \dy  {\, \mathrm{d}y}
\def \dz  {\, \mathrm{d}z}

\DeclareMathOperator{\diam}{diam} %
\DeclareMathOperator{\supp}{supp} %
\DeclareMathOperator{\loc}{loc} %


\title{Topological and Purely Topological Alignment Dynamics}
\author{Trevor M. Leslie}
\address{Department of Applied Mathematics,
	Illinois Institute of Technology}
\email{tleslie@illinoistech.edu}
\author{Jan Peszek}
\address{Institute of Applied Mathematics and Mechanics, University of Warsaw}
\email{j.peszek@uw.edu.pl}
\date{}

\begin{document}

	\maketitle
	\begin{abstract}
		We study the Euler Alignment system of collective behavior, equipped with `topological' interaction protocols, which were introduced to the mathematical literature by Shvydkoy and Tadmor.  Interactions subject to these protocols may depend on both the Euclidean distance between agents and on the mass distribution between them---the `topological' component.  When the interaction protocol is regular, we prove sufficient conditions for the existence of global-in-time classical solutions, related to the initial nonnegativity of a conserved quantity of the system.  The remainder of our results explore the case where the interactions are `purely' topological and the interactions do not depend on the Euclidean distance.  We show that in this case, the system decouples into an autonomous velocity equation in mass coordinates together with a scalar conservation law with time-dependent flux determined by the velocity.  We analyze the long-time behavior for the dynamics associated to both regular and singular protocols.  
	\end{abstract}
	
	\section{Introduction}
	
	\subsection{The Euler Alignment System}
	
	This paper is concerned with the Euler Alignment system, which when posed on $\mathbb{R}^d$ takes the following general form.
	\begin{equation}
		\label{e:EA}
		\left\{ 
		\begin{split}
			\partial_t\rho + \nabla\cdot(\rho u) & = 0,
			\qquad (x,t)\in \mathbb{R}^d\times (0,\infty), \\
			\partial_t (\rho u) + \n\cdot(\rho u \otimes u) & = \int_{\mathbb{R}^d} \phi(x,y,t)(u(y,t) - u(x,t)) \rho(x,t) \rho(y,t) \dy 
		\end{split}
		\right.	
	\end{equation}
	Here $\rho=\rho(x,t)\ge 0$ and $u=u(x,t)\in \mathbb{R}^d$ denote the density and macroscopic velocity of a continuum of agents which interact according to the `communication protocol' $\phi\ge 0$.  We make the global assumption that $\rho$ is normalized to have mass $1$:
	\[
	\int_{\mathbb{R}^d} \rho(x,\cdot)\dx = 1.
	\]
	The character of the system \eqref{e:EA} and evolution of the associated dynamics depend strongly on the properties of $\phi$.  Most of the literature on \eqref{e:EA} treats the case where $\phi = \phi(|x-y|)$ is a function of the Euclidean distance between $x$ and $y$ only (possibly singular at zero, in which case the integral in \eqref{e:EA} may be understood in a principal value sense if necessary). 
	But $\phi$ may have other dependencies as well.  In particular, this paper is concerned with the case where $\phi(x,y,t)$ may depend on the mass distribution between $x$ and~$y$, a paradigm that was first advocated for in the mathematical literature by Shvydkoy and Tadmor  \cite{STtopo}, who referred to such $\phi$ as `topological protocols.' In fact, the majority of our work is focused on a `purely topological' variant of the protocols introduced in \cite{STtopo}, where $\phi$ depends \textit{only} on the mass distribution between points and not on their Euclidean distance.  Interestingly, the use of purely topological protocols leads to a complete decoupling of the velocity equation in mass coordinates, which enables the study of the long-time behavior in terms of an autonomous velocity equation.
	
	The Euler Alignment system was first derived by Ha and Tadmor in \cite{HT2008} as a hydrodynamic version of the celebrated Cucker--Smale system of ODEs \cite{CS2007a, CS2007b}, which reads as follows. For $N$ agents with masses, positions, and velocities given by $(m_i, x_i, v_i)_{i=1}^N$, 
	\begin{equation}
		\label{e:CS}
		\left\{ 
		\begin{split} 
			\dot{x}_i & = v_i, \qquad i = 1, \ldots, N,\\
			\dot{v}_i & = \sum_{\substack{j=1 \\ j\ne i}}^N m_j \phi(x_i, x_j, t)(v_j - v_i).
		\end{split} 	
		\right. 
	\end{equation}
	System \eqref{e:EA} is the pressureless (monokinetic) variant of macroscopic alignment dynamics derived from the Cucker--Smale model cf. \cite{FigalliKang2019, FP23} (alternative non-monokinetic approaches can be found e.g. in \cite{KMT2015}).
	
	The study of \eqref{e:EA} and \eqref{e:CS} falls under the umbrella of \textit{collective dynamics}, which is concerned with understanding how large-scale macroscopic structures emerge from local interactions in systems of many agents.  Formulating and analyzing mathematical models that capture such phenomena is in general highly nontrivial.  The Cucker--Smale and Euler Alignment systems have received copious attention because of their compatibility with `flocking' dynamics (terminology intentionally reminiscent of a flock of birds) and their relative mathematical tractability; analysis of these systems is nonetheless quite subtle, and a good deal remains unknown.  We will define precisely what we mean by `flocking' below and discuss a select few references, but it is not our purpose here to review the already vast literature on Cucker--Smale-type systems.  We refer instead to the survey papers \cite{MT2014, ShvydkoySurvey, T-21}, the book \cite{ShvydkoyBook}, and references therein, for more on these systems and some of their applications. 
	
	\subsection{Flocking Dynamics and the Role of Topological Distance}
	The communication protocols $\phi(x,y,t)$ we consider will be symmetric in $x$ and $y$, which will guarantee conservation of the total momentum of the system:
	\[
	\int_{\mathbb{R}^d} \rho u(t)\dx = \overline{u} :=\int_{\mathbb{R}^d} \rho_0 u_0 \dx,
	\qquad t\ge 0.
	\]
	For our version of the system, then, it makes sense to say that the system \eqref{e:EA} undergoes `velocity alignment' in the time-asymptotic limit if $u(t)$ tends in some sense to its density-weighted average $\overline{u}$ as $t\to \infty$.  (In the absence of guaranteed momentum conservation, a suitable replacement would be to require that $\diam u(t)\to 0$.)  We say that `flocking' occurs if the density profile converges (in some appropriate topology) to a traveling wave in the time-asymptotic limit: $\rho(\cdot +\overline{u}t,t) \stackrel{t\to +\infty}{\longrightarrow} \rho_\infty$.  There is a robust literature detailing and proving various conditions which are sufficient to guarantee that velocity alignment and flocking occur.  Many of them assume that the communication protocol is radial, $\phi = \phi(|x-y|)$, and  `heavy-tailed', i.e., nonintegrable at infinity, and establish results of the form `smooth solutions must flock' under this heavy-tail assumption, cf. \cite{HaLiu2008} and many later references.  It is more realistic, however, to assume that distant agents do not communicate directly with one another---i.e., that the protocol is `local'---and thus that $\phi(x,y,t)$ should be zero if $x$ and $y$ are sufficiently far apart.  For such local protocols, the only mechanism through which information can propagate longer distances is through \textit{in}direct communication, which requires a `chain' of communicating agents, or, in the context of \eqref{e:EA}, some sort of control from below on the density or a related quantity.  
	
	Establishing alignment through chain-connectivity arguments is a delicate matter; we mention two papers where such arguments appear.  One is the work \cite{MPT2019} by Morales, the second author, and Tadmor, where it is assumed that the communication protocol is extremely strong (relative to features of the initial configuration $(\rho_0, u_0)$) for small $|x-y|$.  The other work, which is more relevant to our present considerations, is due to Shvydkoy and Tadmor \cite{STtopo}, and it takes a complementary approach.  	Rather than assuming once and for all that the communication is sufficiently strong, the authors build into their protocol a mechanism that `boosts' $\phi$ in regions where $\rho$ is small---precisely where it is needed most.  More concretely, \cite{STtopo} introduces a dependence of $\phi(x,y,t)$ on the so-called \textit{topological distance} $\dd_{\rho(t)}(x,y)$ between $x$ and $y$ at time $t$, which they define to be the integral of $\rho(t)$ over the `communication domain' $\Omega(x,y)$ that lies between them.  There is not a canonical choice for the exact shape of $\Omega(x,y)$ (cf. \cite{ReynoldsShvydkoy2020}), but one natural approach is to take $\Omega(x,y)$ to be an American football-shaped region whose two ends are located at $x$ and $y$. The relevance of the topological distance is motivated by field studies, e.g., the StarFlag project (cf. \cite{BalleriniTopo2008}), and the principle that it encodes is that information travels more slowly in regions of higher density.  In other words, intermediate agents cause interference in communication, and thus agents with more mass between them should be considered `farther apart'. On the other hand, the information flows undisturbed through regions with low density, similar to the case of Cucker--Dong model with communication reaching $q$-closest neighbors \cite{CD-16, fuzzy, MinMuchPe2020}.  
	
	With respect to the principles outlined above, \cite{STtopo} offers a compelling narrative.  It provides a criterion on $\rho(t)$ that is sufficient to guarantee that alignment occurs, and then demonstrates how that criterion relaxes when topological distance is incorporated into the communication protocol.  Thus not only is the topological distance physically relevant, but furthermore, its inclusion in the model actively serves to promote flocking behavior.  Our analysis below will showcase some additional features of the topological dynamics, including the extreme case where the communication protocol depends \textit{only} on the topological distance between points.  	
	
	\subsection{Well-posedness and the Role of Dimension 1}
	
	It is well known that systems with the left-hand side like that of \eqref{e:EA} are dramatically harder to analyze in higher dimensions than they are when the spatial dimension is 1 (at least, from the point of view of well-posedness).  The Euler Alignment system is no exception.  Suppose for a moment that $\phi = \phi(|x-y|)$ is radial.  Then \eqref{e:EA} admits an additional conserved quantity (call it $e$) satisfying, in 1D, a pure continuity equation $\partial_t e + \partial_x(ue) = 0$, cf. \cite{TT2014, CCTT2016,ShvydkoyTadmorI, DKRT}, etc.  This quantity $e$ is the cornerstone of essentially all existing well-posedness theory for \eqref{e:EA} in one spatial dimension.  However, no natural analog of $e$ is known to satisfy a similarly simple equation in higher dimensions, which is part of why the state of the art in 1D is much more developed than the corresponding theory in higher dimensions for this specific system. 
	
	In 1D, the communication domain $\Omega(x,y)$ in Shvydkoy and Tadmor's definition of the topological distance degenerates to an interval $(x,y)$, so that for $\rho\in L^1(\mathbb{R})$ we have
	\[
	\dd_{\rho}(x,y) = \bigg| \int_x^y \rho(z)\dz\bigg|
	= |M(y) - M(x)|,     \qquad M(z) = \int_{-\infty}^z \rho(z')\dz',
	\]
	and a `topological kernel' will take the form
	\begin{equation}
		\label{e:topokernel}
		\phi(x,y,t) = \varphi(\!\dd_{\rho(t)}(x,y), x-y)
	\end{equation}
	for some $\varphi:[0,1]\times \mathbb{R}\to [0,\infty)$.  In \cite{STtopo}, it is shown that, perhaps surprisingly, the Euler Alignment system equipped with $\varphi(\!\dd,z) = \frac{h(|z|)}{\dd^\tau |z|^{1+\alpha - \tau}}$ (with $h$ smooth, $0<\alpha<2$, and $\tau>0$) still admits an `$e$-quantity'; this quantity is crucial for their theory of existence and uniqueness of 1D strong solutions in the periodic, vacuum-free setting.  
	
	In Section \ref{s:threshold} below, we demonstrate that the Euler Alignment system corresponding to any $\varphi(\!\dd, z)$ satisfying some mild regularity properties (including boundedness) also admits an $e$-quantity.  We will establish that global-in-time classical solutions exist when this quantity is nonnegative at time zero; this is essentially an extension of the positive direction of the so-called `critical threshold condition' of Carrillo, Choi, Tadmor, and Tan~\cite{CCTT2016}, which applies to bounded, Euclidean kernels $\phi = \phi(|x-y|)$.  Unlike the case considered in \cite{CCTT2016}, it is not clear whether $e_0\ge 0$ is a \textit{sharp} condition in our case; the topological contribution seems to promote well-posedness, but it is difficult to quantify exactly by how much.  
	
	\subsection{Purely Topological Protocols and Reformulation of the System}
	
	After establishing global-in-time existence of classical solutions under the threshold condition $e_0\ge 0$, we will concern ourselves with an additional simplification in 1D of the system, which arises in the very special but instructive case where $\phi$ is `purely topological', i.e., $\varphi(\!\dd, z) = \varphi(\!\dd)$.  More precisely, we assume that $\varphi:[0,1]\to \mathbb{R}\cup\{+\infty\}$ is nonnegative, possibly singular at zero (but nowhere else), and that it is at least locally Lipschitz continuous away from $0$.  The communication protocol $\phi(x,y,t)$ then takes the form
	\begin{equation}
		\label{e:puretopophi}
		\phi(x,y,t) = \varphi(\!\dd_{\rho(t)}(x,y)).	
	\end{equation}
	For reference, we explicitly record the system \eqref{e:EA} specialized to the case of dimension 1 and $\phi$ as in \eqref{e:puretopophi}:
	
	\begin{equation}
		\label{e:EApuretopo}
		\left\{ 
		\begin{split}
			\partial_t\rho + \partial_x(\rho u) & = 0,
			\qquad (x,t)\in \mathbb{R}\times (0,\infty), \\
			\partial_t (\rho u) + \partial_x(\rho u^2) & = -\int_{\mathbb{R}} \varphi(\!\dd_{\rho(t)}(x,y))(u(x,t) - u(y,t)) \rho(x,t) \rho(y,t) \dy .
		\end{split}
		\right.	
	\end{equation}
	Let us reformulate it formally in mass coordinates (see e.g. \cite{MUCHAMASA1, MUCHAMASA2}). Suppose $(\rho, u)$ is a non-vacuous and sufficiently smooth solution of \eqref{e:EApuretopo}; let $M(\cdot,t)$ be a cumulative distribution of $\rho(\cdot, t)$, and define $v:(0,1)\to \mathbb{R}$ via $v(M(x,t),t) = u(x,t)$.  Then we have
	\begin{align*}
		\p_t v(M(x,t), t)
		& = \frac{\dd}{\dt} v(M(x,t),t) - \p_t M(x,t) \p_m v(M(x,t),t) \\
		& = \p_t u(x,t) + u(x,t)\p_x M(x,t) \p_m v(M(x,t),t) 
		= (\p_t u + u \p_x u)(x,t) \\
		& = -\int_\R \varphi(\!\dd_{\rho(t)}(x,y))(u(x,t) - u(y,t))\rho(y,t)\dy \\
		& = -\int_\R \varphi(|M(x,t) - M(y,t)|)(v(M(x,t),t) - v(M(y,t),t)) \p_y M(y,t)\dy \\
		& = -\int_0^1 \varphi(|M(x,t)-m'|)(v(M(x,t),t) - v(m',t))\dm',
	\end{align*}
	which leads to
	\begin{equation}
		\label{e:reformulation}
		\begin{cases}
			\p_t v(m,t) = -\displaystyle\int_0^1 \varphi(|m-m'|)(v(m,t)-v(m',t))\dm',\qquad (m,t)\in (0,1)\times (0,\infty), \\
			\p_t M(x,t) + \p_x(A(M(x,t),t)) = 0,
			\qquad A(m,t) = \int_0^m v(m',t)\dm'
		\end{cases}
	\end{equation}
	The point of this reformulation is that the unknown $v$ decouples completely. Once a solution for \eqref{e:reformulation}$_1$ is found, the flux $A$ can be computed and \eqref{e:reformulation}$_2$ solved, using the theory of scalar conservation laws. 
	We perform a rigorous analysis of equivalence between \eqref{e:EApuretopo} and \eqref{e:reformulation} in Section \ref{s:Equivalence}.
	
	\subsection{Outline of the Paper and Statement of Results}
	\label{ss:SOR}
	
	This paper initiates the study of four main questions, the last three of which concern the case of purely topological kernels.
	\begin{enumerate}[label = (\roman*)]
		\item Which initial data $(\rho_0, u_0)$ lead to global-in-time classical solutions for the Euler Alignment system with bounded topological protocols?
		\item In what sense is the system \eqref{e:reformulation} well-posed, and how does the well-posedness depend on $\varphi$?
		\item When exactly does a solution $(v,M)$ of \eqref{e:reformulation} generate a pair $(\rho, u)$ that satisfies \eqref{e:EApuretopo} in a meaningful sense?
		\item Finally, how do solutions $(v,M)$ of \eqref{e:reformulation} behave, and how does this behavior transfer to the corresponding pairs $(\rho, u)$?  In particular, for which kernels $\varphi$ are stability,  velocity alignment and flocking guaranteed for any initial data?
	\end{enumerate}
	
	These questions are of course intertwined, but we basically address question (i) in Section \ref{s:threshold}, question (iii) in Section \ref{s:Equivalence}, question (ii) in Section \ref{s:vsolutions}, and question (iv) in Section \ref{s:LongTime}.  We give a brief summary.  
	
	\medskip
	
	\subsubsection{A Threshold Condition for Global Existence of Classical Solutions}  As mentioned above, our first result concerns classical solutions of the system with regular topological protocols.
	\begin{theo}
		\label{t:classical}
		Consider the Euler Alignment system \eqref{e:EA} in one spatial dimension, with communication protocol of the form \eqref{e:topokernel}, where $\varphi = \varphi(\!\dd,z)$ is Lipschitz and nonincreasing in $\dd$, as well as even and bounded in~$z$.  
		Assume $\rho_0, u_0\in C^1(\mathbb{R})$ and define $\psi_0:\mathbb{R}\to \mathbb{R}$ via
		\begin{equation}
			\label{e:psi0def}
			\psi_0(\alpha) := u_0(\alpha) + \int_\R \Phi \big(\!\dd_{\rho_0}(\alpha, \gamma), \alpha -\gamma\big)\rho_0(\gamma) \dd\gamma, \qquad 
			\Phi(\!\dd,z) = \int_0^z \varphi(\!\dd,\zeta)\dd \zeta
		\end{equation}    
		If $\psi_0$ is nondecreasing, then there exists a unique global-in-time classical solution associated to the initial data $(\rho_0, u_0)$. 
	\end{theo}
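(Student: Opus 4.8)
The plan is to mimic the classical approach to the critical-threshold theory for the 1D Euler Alignment system (as in \cite{CCTT2016}), adapted to accommodate the topological dependence of $\varphi$ on $\dd_{\rho(t)}$. First I would pass to Lagrangian/characteristic coordinates: let $\alpha \mapsto X(\alpha,t)$ solve $\partial_t X(\alpha,t) = u(X(\alpha,t),t)$ with $X(\alpha,0)=\alpha$, so that mass is transported, $\rho(X(\alpha,t),t)\,\partial_\alpha X(\alpha,t) = \rho_0(\alpha)$, and crucially $\dd_{\rho(t)}(X(\alpha,t),X(\gamma,t)) = \dd_{\rho_0}(\alpha,\gamma)$ is \emph{constant along characteristics} (this uses only that mass between two transported points is conserved by the continuity equation, which holds in 1D regardless of the form of $\phi$). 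Next I would compute the evolution of the quantity $\psi(\alpha,t) := u(X(\alpha,t),t) + \int_\R \Phi(\dd_{\rho_0}(\alpha,\gamma), X(\alpha,t)-X(\gamma,t))\rho_0(\gamma)\,\dd\gamma$; differentiating in $t$, using the momentum equation for the $\partial_t u$ piece and $\partial_t(X(\alpha,t)-X(\gamma,t)) = u(X(\alpha,t),t)-u(X(\gamma,t),t)$ together with $\partial_z \Phi = \varphi$ for the second piece, I expect the alignment term to cancel exactly against the transport of $\Phi$ — this is the point of the definition \eqref{e:psi0def} — so that $\psi$ is \emph{conserved along characteristics}: $\psi(\alpha,t) = \psi_0(\alpha)$.

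With $\psi$ in hand, I would extract the key ODE along characteristics for the auxiliary unknown $d(\alpha,t) := \partial_\alpha X(\alpha,t)$ (equivalently, control $\partial_x u$ along the flow). Differentiating the relation $u(X(\alpha,t),t) = \psi_0(\alpha) - \int_\R \Phi(\dd_{\rho_0}(\alpha,\gamma), X(\alpha,t)-X(\gamma,t))\rho_0(\gamma)\,\dd\gamma$ in $\alpha$ and using the continuity equation $\partial_t d = (\partial_x u) d$, I expect to arrive at a scalar Riccati-type inequality/equation of the form $\partial_t d = -d \cdot \big(\text{(something nonnegative when } \psi_0' \geq 0\text{)} + (\text{bounded terms}) \cdot d\big)$, or more precisely an identity in which the sign of $\psi_0'$ controls whether $d$ can blow up to $0$ (vacuum formation) or $\partial_x u \to -\infty$ (shock formation) in finite time. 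The monotonicity hypothesis $\psi_0' \geq 0$ should ensure $d$ stays in a compact subset of $(0,\infty)$, so that $\rho$ stays bounded above and below away from zero on compact sets and $\partial_x u$ stays bounded; boundedness of $\varphi$ in $\dd$ (hence of $\Phi$ and its $z$-derivative $\varphi$) and evenness in $z$ are used here to keep the nonlocal terms under control and to guarantee the cancellation producing $\psi$ has the right sign structure.

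The main obstacle I anticipate is the \emph{nonlocal} nature of the $\alpha$-derivative of the $\Phi$-integral: unlike the purely Euclidean case, differentiating $\Phi(\dd_{\rho_0}(\alpha,\gamma), X(\alpha,t)-X(\gamma,t))$ in $\alpha$ produces a term $\partial_{\dd}\Phi \cdot \partial_\alpha \dd_{\rho_0}(\alpha,\gamma) = \partial_\dd\Phi(\cdot)\cdot \rho_0(\alpha)\,\sgn(\alpha-\gamma)$ in addition to the Euclidean term $\varphi(\cdot) d(\alpha,t)$; one must show this extra contribution has a favorable sign (or is dominated), which is where $\varphi$ nonincreasing in $\dd$ enters — it makes $\partial_\dd \Phi$ have a definite sign — and where a Grönwall argument, rather than a clean Riccati comparison, may be needed. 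Once the a priori bounds on $\rho$, $1/\rho$, and $\partial_x u$ are established on any finite time interval, I would close the argument by a standard continuation/bootstrap: local well-posedness of classical solutions for \eqref{e:EA} with Lipschitz bounded kernel is routine, and the a priori estimates prevent breakdown, yielding the global classical solution; uniqueness follows from a Grönwall estimate on the difference of two solutions in the same functional class.
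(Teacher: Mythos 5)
Your proposal captures the essential strategy of the paper's proof: pass to Lagrangian coordinates, observe that $\dd_{\rho(t)}(X(\alpha,t),X(\gamma,t))$ is conserved along characteristics, deduce that $\psi(\alpha,t)=\psi_0(\alpha)$, and then exploit the definite sign of the topological contribution (coming from $\varphi$ nonincreasing in $\dd$) when differentiating in the label variable. These are indeed the load-bearing ideas, and the proposal correctly identifies that the sign-definite $\partial_{\dd}\Phi$ term is the new feature to be handled relative to the purely Euclidean case.

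Two points where the proposal is misaligned with how the argument actually closes. First, you anticipate a Riccati-type inequality for $d(\alpha,t)=\partial_\alpha X(\alpha,t)$, but there is no quadratic nonlinearity to compare against here: the communication kernel $\varphi$ is \emph{bounded}, so once the nonpositive topological contribution is dropped, the nonlocal term in $\frac{\dd}{\dd t}\big[X(\beta,t)-X(\alpha,t)\big]$ is controlled linearly by $\|\varphi\|_{L^\infty}\,[X(\beta,t)-X(\alpha,t)]$ (using total mass one). The closing estimate is therefore a plain linear Gr\"onwall inequality on the finite difference $X(\beta,t)-X(\alpha,t)$, not a Riccati comparison; the paper works with finite differences in $\alpha$ rather than with $\partial_\alpha X$ directly, which avoids any regularity bookkeeping in the label and makes the sign-definite term manifestly nonnegative after the fundamental theorem of calculus. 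Second, you claim $d$ should stay in a compact subset of $(0,\infty)$, i.e.\ a two-sided bound. Only the one-sided lower bound $d>0$ (equivalently, the monotonicity of $X(\cdot,t)$ together with a lower bound on $\partial_\alpha X$) is needed or obtained: the Lagrangian system is an ODE on a function space with bounded Lipschitz right-hand side, so it is globally solvable by Picard regardless, and translating back to Eulerian only requires invertibility of the flow map plus a lower bound on $\det\nabla_\alpha X$, which the Gr\"onwall estimate
\[
\frac{X(\beta, t) - X(\alpha, t)}{\beta - \alpha} \ge e^{-\|\varphi\|_{L^\infty} t} + \frac{\psi_0(\beta) - \psi_0(\alpha)}{\beta - \alpha} \cdot \frac{1 - e^{-\|\varphi\|_{L^\infty}t}}{\|\varphi\|_{L^\infty}}
\]
provides whenever $\psi_0$ is nondecreasing. (Also, a small terminological slip: $d\to 0$ corresponds to mass concentration/$\rho\to\infty$, not vacuum formation.) With the Riccati framing replaced by linear Gr\"onwall and the two-sided bound replaced by the one-sided one, the proposal matches the paper's proof.
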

	The proof of Theorem \ref{t:classical} is our first goal and can be found in Section \ref{s:threshold}.
	
	\begin{rem}\rm
		Interestingly, the threshold for global-in-time classical solutions of purely topological dynamics can be formulated in mass coordinates in a way that is completely independent of the initial density~$\rho_0$; see Remark~\ref{rem:ddd}. This feature sharply distinguishes the purely topological setting from most known critical threshold results for Euler–alignment and porous medium–type dynamics, where the density plays a fundamental role.
	\end{rem}
	
	We remark that the uniqueness part of the assertion above should be understood to mean that $\rho$ is uniquely determined, and that $u(t)$ is uniquely determined in $\supp \rho(t)$ for each $t\ge 0$.  This version of the Theorem is stated in slightly simplified form; in fact, we really only need $\psi_0$ to be nondecreasing inside the set $U:=\{x:\rho_0(x)>0\}$. It is also possible to include, for instance, the case where $U$ consists of finitely many open intervals and $\rho_0$ and $u_0$ are not required to be $C^1$ (or even defined) outside $U$. Our proof of Theorem \ref{t:classical} follows the approach of the first author in \cite{L2019CTC}, see also \cite{LLST2020geometric}.  The `$e$-quantity' emerges as the spatial derivative of a time-dependent version of $\psi_0$, and indeed, our criterion for global-in-time existence can be written as $e_0:=\partial_x \psi_0 \ge 0$.  We will give a precise formula for $\psi$ in Section \ref{s:threshold}, and we will also discuss a partial converse of Theorem \ref{t:classical} in Remark~\ref{r:classicalnegative}.
	
	In the microscopic and mesoscopic (kinetic) level, transformations akin to \eqref{e:psi0def} serve as a singularity reduction of the kernel $\varphi$  in kinetic descriptions of alignment dynamics. In \cite{ChoiZhang2021, PeszekPoyato2022kinetic} (see also \cite{HaKimParkZhang2019, K-21}) it is shown that the second-order Cucker--Smale-type dynamics with communication $\widetilde{\varphi}(r)=r^{-\alpha}$ with $\alpha\in(0,1)$ can be equivalently reformulated as first-order dynamics with communication $\widetilde{\Phi}(r)\approx r^{1-\alpha}$. (Here $r$ refers to Euclidean distance.) Article \cite{PP-23} reveals a gradient flow structure of such models. 
	
	\subsubsection{Types of Solutions}
	We will define and treat several types of solutions of the systems \eqref{e:EApuretopo} and \eqref{e:reformulation}, described presently.  (See also the diagram below.)  More precise definitions appear below in Section~\ref{s:Equivalence}.
	First, a `weak solution' $(v,M)$ of \eqref{e:reformulation} will simply mean a pair $(v,M)$ such that $v$ is a weak solution of~\eqref{e:reformulation}$_1$ and $M$ is an entropy solution of the scalar conservation law. 
	When we say that $(\rho, u)$ is a `mass-distributional solution' of \eqref{e:EApuretopo}, we will mean that there is a weak solution $(v,M)$ of \eqref{e:reformulation} such that $\partial_x M = \rho$ and $u = v\circ M$.  The pair $(\rho, u)$ is not a priori required to satisfy any equation in this case; rather, the term `mass-distributional solution' should be considered a relaxation of the usual notion of solution that shifts the burden of `solution' to the cumulative mass-distribution function $M$ of $\rho$ and the velocity $v$ in mass coordinates.    Finally, we say that $(v,M)$ is an \textit{admissible} solution if it is a weak solution of~\eqref{e:reformulation} and if $M$ is absolutely continuous in the spatial variable, for almost every time.  Admissible solutions $(v,M)$ are so named because they are in natural correspondence with pairs $(\rho, u)$ that satisfy the Euler Alignment system \eqref{e:EApuretopo} in a certain sense to be described below---see Definition \ref{def:admist}, where we provide a self-contained definition of `admissible solutions' $(\rho, u)$ of the system \eqref{e:EApuretopo}, as well as Proposition \ref{p:equivalence}, where we state the correspondence precisely.
	
	We can make meaningful statements about the long-time behavior of admissible solutions, as we demonstrate in Section \ref{s:LongTime}.  The other situation in which we can provide a rather straightforward analysis of the long-time behavior is the one where $\varphi$ has the form $\varphi(r) = C_s r^{-1-2s}$, with $s>\frac12$ and $C_s$ a constant.  The corresponding $(v,M)$ system, properly interpreted,  exhibits a regularizing effect and yields what we will refer to as $v$\textit{-strong} solutions.    
	
	\bigskip
	
	\begin{tikzpicture}[
		box/.style={draw, rounded corners, align=center, inner sep=6pt},
		arr/.style={-Latex, thick},
		darr/.style={Latex-Latex, thick},
		node distance=18mm and 18mm
		]
		
		\node[box] (VM)
		{Weak/Entropy solution\\
			$(v,M)$ of \eqref{e:reformulation}\\
			Definition \ref{def:vMweak}};
		
		\node[box, right=of VM] (RU)
		{Mass distributional solution\\
			$(\rho,u)$ of \eqref{e:EApuretopo}\\
			Definition \ref{def:MassDistributional}};
		
		\draw[arr] (VM) -- (RU);
		
		\node[box, below=of VM] (AW1)
		{Admissible weak solution\\
			$(v,M)$\\
			Definition \ref{def:admisr}};
		
		\node[box, below=of RU] (AW2)
		{Admissible solution\\
			$(\rho,u)$\\
			Definition \ref{def:MassDistributional}};
		
		\draw[arr] (VM) -- node[left, font=\small] {$M$ regular} (AW1);
		\draw[arr] (RU) -- node[right, font=\small] {$\rho\in L^1$} (AW2);
		
		\draw[darr] (AW1) -- node[above, font=\small]{equivalence} 
		node[below, font=\small] {Prop. \ref{p:equivalence}}
		(AW2);
		
		\node[box, right=40mm of RU] (SF)
		{Stability,\\
			flocking\\
			Thm. \ref{t:alignmentrough}};
		
		\draw[arr] (RU) -- node[above, font=\small] {$\varphi(r)=C_s r^{-1-2s}$, $s>\frac{1}{2}$ }
		node[below, font=\small] {$v$-continuous}
		(SF);
		\draw[arr] (AW2) -- (SF);
		
	\end{tikzpicture}
	
	\bigskip
	
	\subsubsection{Well-posedness and Focus on the $v$-equation}
	Our requirements on a `weak solution' of \eqref{e:reformulation} include the stipulation that $(m,t)\mapsto v(m,t)$ is essentially bounded, which guarantees that $A$ is Lipschitz in $m$, uniformly with respect to time.  The standard Kruzhkov theory \cite{Kruzkov1970} then guarantees the existence of a unique entropy solution $M$ of \eqref{e:reformulation}$_2$ associated to any initial condition in our classes of interest.  Therefore, a mass-distributional solution will exist on a time interval that is as long as is allowed by $v$, and this solution will be admissible for as long as $M$ remains absolutely continuous.  
	
	The hypotheses of Theorem \ref{t:classical}, suitably translated to the $(v,M)$ framework, provide a natural criterion for $M$ to remain absolutely continuous for all time.  On the other hand, a more comprehensive study of conditions guaranteeing absolute continuity of $M$ lie outside the scope of this paper.  Rather, we focus on understanding the $v$ equation, which will provide us with weak solutions of \eqref{e:reformulation} and  carries most information related to stability and long-time behavior.  We consider two main cases: bounded $\varphi$ and $\varphi(r) = C_s r^{-1-2s}$ for some $s\in (0,1)$ (and some constant $C_s$).  These two cases must be treated in different ways.  Indeed, let us denote the right side of \eqref{e:reformulation} by $-Lv$; that is, define
	\[
	(Lf)(m):=\int_0^1 \varphi(|m-m'|)\big(f(m) - f(m')\big) \dm'.
	\]
	If $\varphi$ is bounded, then $L$ is a bounded operator from $L^2(0,1)$ to itself, and $L^2$-valued solutions of \eqref{e:reformulation}$_1$ (which remain in $L^\infty$ if their initial data are in $L^\infty$) can be generated using standard theory of ODEs on a Hilbert space.  On the other hand, if $\varphi(r) = C_s r^{-1-2s}$, then the integral in the definition of $L$ should be interpreted in the principal value sense, and $L = \Lambda_{(0,1)}^s$ is an operator known as the \textit{regional fractional Laplacian} on $(0,1)$ (which makes \eqref{e:reformulation}$_1$ into a version of the fractional heat equation).  In fact, $\Lambda_{(0,1)}^s$ is unbounded, and one must prescribe its domain in order to complete the definition.  The natural choice for our purposes is a certain subspace of the Sobolev space $W^{s,2}(0,1)$, which makes $\Lambda^s_{(0,1)}$ into what is sometimes called the \textit{Neumann regional fractional Laplacian}; in what follows, we denote this operator simply by $\Lambda^s$. More details and further discussion are provided in Section~\ref{s:vsolutions}. 
	
	We compile the main takeaways regarding existence of solutions in the following Theorem; more complete statements will be provided below.
	\begin{theo}
		\label{t:existencerough}
		Consider the system \eqref{e:reformulation} with initial conditions $(v_0,M_0)$, where $v_0~\in~L^\infty(0,1)$ and $M_0$ is right-continuous and nondecreasing, with $\lim_{x\to -\infty} M_0(x)~=~0$ and $\lim_{x\to +\infty} M_0(x)~=~1$.  Assume that $\varphi$ is either bounded or takes the form $\varphi(r) = C_s r^{-1-2s}$,  for some $s\in (0,1)$.
		\begin{enumerate}[label = (\alph*)]
			\item If $\varphi$ is bounded, then there exists a unique weak solution  $(v,M)$ of \eqref{e:reformulation} in the sense of Definition~\ref{def:vMweak} associated to the initial data $(v_0, M_0)$.
			\item If $\varphi(r) = C_s r^{-1-2s}$ and the right side of \eqref{e:reformulation}$_1$ is interpreted as $-\Lambda^s v$, then there exists a unique $v$-strong solution $(v,M)$ of \eqref{e:reformulation} in the sense of Definitions \ref{def:strongheat} and \ref{def:Kruzkov} associated to the initial data $(v_0, M_0)$.
			\item The solutions in (a) and (b) both preserve the mean of $v$ and satisfy the natural energy equality:
			\begin{equation}
				\label{e:meanpreserved}
				\int_0^1 v(m,t) \dm = \int_0^1 v_0(m)\dm,   
			\end{equation}
			\begin{equation}
				\label{e:energyid}
				\frac{\dd}{\dt}\int_0^1 |v(m,t)|^2\dm + \int_{(0,1)\times (0,1)} \varphi(|m-m'|)|v(m,t) - v(m',t)|^2 \dm \dm' = 0.
			\end{equation}
			
		\end{enumerate}
		All of these solutions can automatically be interpreted as mass-distributional solutions of \eqref{e:EApuretopo},  which are admissible in the sense of Definition \ref{def:admist} if $(v,M)$ is admissible in the sense of Definition \ref{def:admisr}.
	\end{theo}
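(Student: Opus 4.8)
The plan is to read \eqref{e:reformulation}$_1$ as an autonomous abstract Cauchy problem $\dot v=-Lv$ on $L^2(0,1)$, to use its solution to define the flux $A(m,t)=\int_0^m v(m',t)\dm'$, and then to solve \eqref{e:reformulation}$_2$ by Kruzhkov's theory; the two hypotheses on $\varphi$ differ only in the analysis of $L$. If $\varphi$ is bounded, then $(Lf)(m)=a(m)f(m)-(Kf)(m)$, where $a(m)=\int_0^1\varphi(|m-m'|)\dm'\in L^\infty(0,1)$ and $K$ is the integral operator with nonnegative kernel $\varphi(|m-m'|)$, so $L$ is a bounded self-adjoint operator on $L^2(0,1)$ with $L\mathbf 1=a-K\mathbf 1=0$. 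If $\varphi(r)=C_s r^{-1-2s}$, then, by the constructions recalled in Section~\ref{s:vsolutions}, $L$ is one of the nonnegative self-adjoint realizations $\Lambda_{\mathsf D},\Lambda_{\mathsf N}$ of the regional fractional Laplacian associated with the quadratic form $f\mapsto\tfrac12\int_{(0,1)^2}\varphi(|m-m'|)|f(m)-f(m')|^2\dm\dm'$ (form domain $H^s_0(0,1)$, resp.\ $H^s(0,1)$), and $-L$ generates an analytic, sub-Markovian $C_0$-semigroup $e^{-tL}$ on $L^2(0,1)$.

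For parts (a) and (b) I would first construct $v$. In case (a), $L$ is bounded, so $v(t)=e^{-tL}v_0$ is the unique solution in $C^1([0,\infty);L^2(0,1))$ by standard ODE theory on a Hilbert space. In case (b), analytic-semigroup theory gives the unique $v$-strong solution $v\in C([0,\infty);L^2(0,1))\cap C^1((0,\infty);L^2(0,1))$ with $v(t)\in D(L)$ and $\dot v(t)=-Lv(t)$ for $t>0$, in the sense of Definition~\ref{def:strongheat}. In both cases $\|v(t)\|_{L^\infty(0,1)}\le\|v_0\|_{L^\infty(0,1)}$: the semigroup $e^{-tL}$ is positivity-preserving---in case (a) because $-L=-aI+K$ with $K$ nonnegative and $-aI$ a multiplication operator, in case (b) by the Markovian property of the associated Dirichlet form---and it is a contraction on $L^\infty$, via $e^{-tL}\mathbf 1=\mathbf 1$ in case (a) and via sub-Markovianity in case (b). Hence $A(\cdot,t)$ is Lipschitz in $m$ with constant at most $\|v_0\|_{L^\infty}$ uniformly in $t$, and $t\mapsto A(\cdot,t)$ is continuous, so Kruzhkov's theorem~\cite{Kruzkov1970} provides a unique entropy solution $M$ of \eqref{e:reformulation}$_2$ with $M(\cdot,0)=M_0$. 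Comparing $M$ with the constant solutions $0$ and $1$ and with its own $x$-translates (legitimate since the flux carries no explicit $x$-dependence), together with finite propagation speed, forces $M(\cdot,t)$ to remain a cumulative distribution function---nondecreasing, right-continuous, with $\lim_{x\to-\infty}M(x,t)=0$ and $\lim_{x\to+\infty}M(x,t)=1$. In case (a) this is the unique weak solution $(v,M)$ of \eqref{e:reformulation} in the sense of Definition~\ref{def:vMweak}; in case (b) it is the unique $v$-strong solution in the sense of Definitions~\ref{def:strongheat} and~\ref{def:Kruzkov}. This proves (a) and (b).

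Part (c) follows by testing \eqref{e:reformulation}$_1$. Pairing $\dot v=-Lv$ with the constant $\mathbf 1$ in $L^2(0,1)$ gives $\frac{\dd}{\dt}\int_0^1 v\,\dm=-\int_0^1 Lv\,\dm=0$: in case (a) this is the antisymmetry of the integrand under $(m,m')\mapsto(m',m)$, while in case (b) it uses $v(t)\in D(L)$ together with the corresponding conservation property of $\Lambda_{\mathsf D}$ and $\Lambda_{\mathsf N}$ recorded in Section~\ref{s:vsolutions}. Pairing $\dot v=-Lv$ with $v$ and symmetrizing the double integral gives $\tfrac12\frac{\dd}{\dt}\|v\|_{L^2}^2=-\langle Lv,v\rangle=-\tfrac12\int_{(0,1)^2}\varphi(|m-m'|)|v(m)-v(m')|^2\dm\dm'$, which is exactly \eqref{e:energyid}; this holds for all $t$ in case (a) (every integral is finite since $\varphi$ is bounded) and for $t>0$ in case (b) (the energy identity for the regional fractional heat equation, $\langle Lv,v\rangle$ being the value of the form). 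Finally, the closing assertion of the theorem is immediate from Definition~\ref{def:MassDistributional}: a weak solution $(v,M)$ of \eqref{e:reformulation} is by definition a mass-distributional solution of \eqref{e:EApuretopo} via $\rho=\partial_x M$ and $u=v\circ M$.

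It remains to prove (d), which I expect to be the main obstacle. Assuming $M_0$ absolutely continuous, I would pass to mass coordinates: with $X_0$ the generalized inverse of $M_0$ and $X(m,t)=X_0(m)+\int_0^t v(m,\tau)\dtau$, the measure $\partial_x M(\cdot,t)$ is the pushforward of Lebesgue measure on $(0,1)$ under the map $m\mapsto X(m,t)$, and this measure is absolutely continuous---equivalently, \eqref{e:reformulation}$_2$ has developed no shock---precisely when $m\mapsto X(m,t)$ is strictly increasing. Since $M_0$ has no jumps, $X_0$ is already strictly increasing, so it suffices that the perturbation $m\mapsto\int_0^t v(m,\tau)\dtau$ be Lipschitz with constant small enough relative to the modulus of strict monotonicity of $X_0$. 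In case (b) this bound is supplied by the parabolic smoothing of the regional fractional heat flow established in Section~\ref{s:vsolutions}: one shows $\int_0^t\|v(\cdot,\tau)\|_{\mathrm{Lip}_m}\dtau\to 0$ as $t\to 0^+$, so that $X(\cdot,t)$ is a small Lipschitz perturbation of $X_0$ and stays strictly increasing on some interval $[0,T)$; in case (a) one instead propagates the spatial regularity of $v$ inherited from the initial data. Either way $M(\cdot,t)$ is absolutely continuous for $t\in[0,T)$, so $(v,M)$ is admissible on $[0,T)$. The real difficulty is that $v_0$ lies only in $L^\infty$ and $\rho_0=\partial_x M_0$ only in $L^1$, so a priori the flux $A(\cdot,t)$ is merely Lipschitz---not $C^1$---in $m$; controlling its $m$-regularity finely enough to rule out instantaneous shock formation (and thereby to pin down the admissibility horizon $T$) is exactly where the smoothing of the fractional flow in case (b), and the structure of the data in case (a), carry the argument.
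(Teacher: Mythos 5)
Your treatment of parts (a)--(c) follows essentially the same route the paper takes: read \eqref{e:reformulation}$_1$ as an abstract Cauchy problem, invoke bounded-operator semigroup theory when $\varphi$ is bounded (Remark~\ref{r:weakexist}), invoke the Gal--Warma existence result (Theorem~\ref{t:strongheat}) in the singular case, feed the resulting Lipschitz flux $A(m,t)=\int_0^m v$ to Kruzhkov's theory, and obtain \eqref{e:meanpreserved}--\eqref{e:energyid} by testing with $\mathbf 1$ and with $v$ (the Gelfand-triple/Lions--Magenes computation of Remark~\ref{r:heatenergy}). The paper leaves the $L^\infty$ maximum principle implicit by delegating to ``standard arguments'' and to the definition of strong solution in \cite{GalWarma2016}; your observation that $e^{-tL}$ is a Markov (resp.\ sub-Markovian) semigroup, hence an $L^\infty$-contraction, is a clean way to make this explicit, and your comparison-principle argument that $M(\cdot,t)$ remains a CDF is a correct filling-in of a step the paper also omits.

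Where a genuine gap opens is in part (d). The paper's own ``proof'' of (d) is only a one-sentence remark (short-time regularity of entropy solutions with regular data), so you are not missing an argument the authors gave; but the Lagrangian argument you sketch does not close under the stated hypotheses. You write $X(m,t)=X_0(m)+\int_0^t v(m,\tau)\dtau$ and want strict monotonicity of $X(\cdot,t)$ for small $t$. Two problems: (i) $M_0$ absolutely continuous guarantees $X_0$ strictly increasing, but provides no quantitative modulus (if $\rho_0$ is unbounded, $X_0(m_2)-X_0(m_1)$ can be $o(m_2-m_1)$), so ``Lipschitz perturbation small relative to the modulus of strict monotonicity of $X_0$'' has no content; and (ii) in case (a), $v(t)=e^{-tL}v_0$ inherits only the $L^\infty$ regularity of $v_0$, so there is no $\mathrm{Lip}_m$ control at all --- indeed a jump discontinuity in $v_0$ with $v_0$ decreasing across it makes characteristics cross instantly, so the claim ``in case (a) one instead propagates the spatial regularity of $v$ inherited from the initial data'' is vacuous for generic $v_0\in L^\infty$. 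In case (b), the parabolic smoothing estimate you invoke, $\|v(t)\|_{\mathrm{Lip}_m}\lesssim t^{-1/(2s)}$, is integrable near $t=0$ only when $s>\tfrac12$, so the argument cannot cover the full range $s\in(0,1)$ claimed in the statement. You correctly identify this as ``the main obstacle'' and ``the real difficulty,'' which is the right instinct --- but the proposal as written does not actually supply the missing mechanism, and one should be aware that it is not a matter of routine bookkeeping.
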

	The proof of (a) is mostly omitted, as is the part of (c) that deals with bounded kernels $\varphi$.  However, see Remark \ref{r:weakexist} for some notes on this case. We justify statement (b) by recalling a fundamental existence result for the regional fractional heat equation, see Theorem \ref{t:strongheat}. The other half of item (c) (dealing with singular rather than bounded kernels) is discussed in Remark \ref{r:heatenergy}.

	
	\begin{rem}\rm\label{rem:ddd}
		Suppose $\varphi$ is Lipschitz and nonincreasing. Then Theorem \ref{t:classical}, translated into new Lagrangian coordinates, essentially tells us that the mass-distributional solution generated from $(v_0, M_0)$ is admissible globally in time if the following quantity is nondecreasing:
		\begin{equation*}
			a(m) = v_0(m) + \int_0^1 \varphi(|m-m'|)(m-m') \dm'.
		\end{equation*} 
	\end{rem}

	\begin{rem}\rm\label{r:nowynew}
		In contrast to the case of Lipschitz kernels, it is unclear for singular $\varphi$ whether there is a clean way to upgrade from mass-distributional to admissible solutions simply by making additional assumptions on the initial data.  Part of difficulty is the lack of translation invariance of the regional fractional Laplacian operator, and furthermore that this operator does not commute with derivatives and is not amenable to difference-quotient analysis.  Relatedly, solutions $v$ of \eqref{e:reformulation}$_1$ (with right side $-\Lambda^s v$) will remain locally Lipschitz in $(0,1)$ but cannot be expected to be regular all the way to the boundary.  
		
		Despite the difficulties with generating admissible solutions under singular protocols, our notion of mass-distributional solutions is robust in this setting and continues to yield existence and uniqueness of solutions (though one must be careful with the interpretation of these solutions, c.f. Remark \ref{bonafide}), as well as meaningful flocking estimates (see Section \ref{s:LongTime}).
	\end{rem}
	
	\subsubsection{Stability, Uniqueness and Long-Time Behavior}	
	\begin{theo}\label{t:alignmentrough}~
		\begin{enumerate}[label = (\alph*)]
			\item Suppose that $\varphi$ is bounded and
			$$ c_\varphi:={\rm esssup}\left\{\int_0^1\frac{1}{\varphi(|m-m'|)}\dd m':\quad m\in (0,1) \right\}<+\infty,$$
			and let $(\rho_1, u_1)$ and $(\rho_2,u_2)$ be a pair of admissible solutions of \eqref{e:EApuretopo} with equal means. Then we have
			\begin{equation*}
				\int_{\R^2}|u_1(x,t)-u_2(y,t)|^2\dd\pi_t(x,y)\leq e^{-\frac{2t}{c_\varphi}}\int_{\R^2}|u_1(x,0)-u_2(y,0)|^2\dd\pi_0(x,y) 
			\end{equation*}
			where $\pi_t(x,y) := (M^{-1}_1(\cdot,t),M^{-1}_2(\cdot,t))_\sharp\lambda$, where $\lambda$ is the Lebesgue measure. In particular, the exponential alignment estimate
			$$ \int_\R|u_1(x,t)-\bar{u}_1|^2\dd\rho_1(x,t)\lesssim e^{-\frac{2t}{c_\varphi}}\xrightarrow{t\to+\infty} 0,\quad \bar{u}_1:=\int_\R u_1(x,0)\dd\rho_1(x,0)$$
			holds true.
			\item Let $(\rho_1, u_1)$ and $(\rho_2,u_2)$ be a pair of mass-distributional solutions of \eqref{e:EApuretopo} with equal means associated with $v$-strong solutions $(M_1,v_1)$ and $(M_2,v_2)$ of \eqref{e:reformulation}$_1$ under the protocol $\varphi(r) = C_sr^{-1-2s}$ with $s>\frac{1}{2}$, and the right side of \eqref{e:reformulation}$_1$ interpreted as $-\Lambda^s v$. Here $v_1,v_2$ are continuous representatives of their respective classes in $W^{s,2}(0,1)$. Then there exists a constant $\lambda_1>0$ such that for all $t\geq \tau>0$ we have
			$$ \sup_{x\in\R}|u_1(x,t)-u_2(x,t)|\lesssim e^{-\lambda_1 (t-\tau)}\|v_1(\tau)-v_2(\tau)\|_{W^{s,2}} $$
			In particular,
			$$\sup_{x\in\R}|u_1(x,t)-\bar{u}_1|\lesssim e^{-\lambda_1 t}\xrightarrow{t\to+\infty} 0, $$
			where
			$$ \bar{u}_1:=\int_\R u_1(x,0)\dd\rho_1(x,0). $$
			Moreover, if additionally $v_1(0),v_2(0)\in W^{s,2}(0,1)$ then we can take $\tau=0$ above.
			\item In particular, admissible solutions and $v$-strong mass-distributional solutions with initial data in  $W^{s,2}(0,1)$ are unique.
		\end{enumerate}
	\end{theo}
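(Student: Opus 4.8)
The plan rests on a single structural observation: the velocity equation \eqref{e:reformulation}$_1$ is \emph{linear}, so if $L$ denotes the operator on its right-hand side (a bounded self-adjoint operator on $L^2(0,1)$ when $\varphi$ is bounded, and $\Lambda_{\mathsf{N}}$ when $\varphi(r)=C_s r^{-1-2s}$), the difference $w:=v_1-v_2$ of two $v$-solutions again solves $\p_t w=-Lw$. Conservation of the mean of $v$ (Theorem~\ref{t:existencerough}(c)) together with the equal-means hypothesis gives $\int_0^1 w(m,t)\dm=0$ for all $t$, so $w$ stays in the mean-zero subspace, where $L$ has a spectral gap. Parts (a) and (b) are then a Gronwall estimate for $\|w(t)\|$ followed by a change of variables back to $(\rho,u)$, and part (c) is the degenerate case $w(0)=0$.

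For part (a), since $w$ solves the linear equation it inherits the energy identity \eqref{e:energyid}, giving $\frac{\dd}{\dt}\|w(t)\|_{L^2}^2=-\int_{(0,1)^2}\varphi(|m-m'|)\,|w(m,t)-w(m',t)|^2\dm\dm'$. I would combine this with the weighted Poincaré inequality
\begin{equation*}
\int_{(0,1)^2}\varphi(|m-m'|)\,|g(m)-g(m')|^2\dm\dm'\ \ge\ \frac{2}{c_\varphi}\int_0^1|g(m)|^2\dm,\qquad \int_0^1 g=0,
\end{equation*}
which follows by writing $g(m)=\int_0^1(g(m)-g(m'))\dm'$ and applying Cauchy--Schwarz with weight $\varphi(|m-m'|)^{-1}$, using the definition of $c_\varphi$. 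Gronwall then yields $\|w(t)\|_{L^2}^2\le e^{-2t/c_\varphi}\|w(0)\|_{L^2}^2$, and the substitution $x=M_1^{-1}(m,t)$, $y=M_2^{-1}(m,t)$ identifies $\int_{\R^2}|u_1(x,t)-u_2(y,t)|^2\dd\pi_t(x,y)$ with $\|v_1(t)-v_2(t)\|_{L^2}^2=\|w(t)\|_{L^2}^2$, which is the stability estimate. The alignment estimate is the same computation applied to $w:=v_1-\bar u_1$ (a legitimate mean-zero solution, since $\bar u_1$ is a stationary solution of \eqref{e:reformulation}$_1$ and $\int_0^1 v_1(\cdot,t)=\bar u_1$), combined with $\int_\R|u_1(x,t)-\bar u_1|^2\dd\rho_1(x,t)=\int_0^1|v_1(m,t)-\bar u_1|^2\dm$.

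For part (b), $L=\Lambda_{\mathsf{N}}$ is nonnegative and self-adjoint with form domain $W^{s,2}(0,1)$, which embeds compactly in $L^2(0,1)$, so $\Lambda_{\mathsf{N}}$ has discrete spectrum $0=\lambda_0<\lambda_1\le\cdots$ with $\lambda_0$ simple and eigenspace the constants. On the mean-zero subspace, spectral calculus shows that $e^{-t\Lambda_{\mathsf{N}}}$ contracts at rate $\lambda_1$ both in $L^2$ and in the form norm $\|\cdot\|_{W^{s,2}}$, and that the smoothing bound $\|e^{-\tau\Lambda_{\mathsf{N}}}g\|_{W^{s,2}}\lesssim\tau^{-1/2}\|g\|_{L^2}$ (for $\tau\in(0,1]$) promotes $w(\tau)\in L^2$—valid for every $\tau>0$ since $v_0\in L^\infty\subset L^2$—to $w(\tau)\in W^{s,2}$; hence $\|w(t)\|_{W^{s,2}}\lesssim e^{-\lambda_1(t-\tau)}\|w(\tau)\|_{W^{s,2}}$ for $t\ge\tau$, with $\tau=0$ admissible when $v_1(0),v_2(0)\in W^{s,2}$. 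Since $s>\tfrac12$, the Sobolev embedding $W^{s,2}(0,1)\hookrightarrow C^{0,s-1/2}([0,1])\subset L^\infty(0,1)$ controls $\sup_m|v_1(m,t)-v_2(m,t)|$ by the same bound. To reach the pointwise-in-$x$ estimate I would write $u_i=v_i\circ M_i$ and decompose $u_1(x,t)-u_2(x,t)=(v_1-v_2)(M_1(x,t),t)+\big(v_2(M_1(x,t),t)-v_2(M_2(x,t),t)\big)$: the first term is bounded by $\|w(t)\|_{L^\infty}$, and the second by $[v_2(t)]_{C^{0,s-1/2}}\,|M_1(x,t)-M_2(x,t)|^{s-1/2}$, using $[v_2(t)]_{C^{0,s-1/2}}\lesssim\|v_2(t)-\bar u_2\|_{W^{s,2}}\lesssim e^{-\lambda_1(t-\tau)}$ and $|M_1-M_2|\le 1$. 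The alignment statement is again the special case $v_2\equiv\bar u_1$.

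For part (c), if the two solutions share their initial data then $w(0)=0$, and since $L$ is nonnegative the energy identity forces $0\le\|w(t)\|_{L^2}^2\le\|w(0)\|_{L^2}^2=0$ for all $t$ (no spectral gap needed), so $v_1\equiv v_2$; then the fluxes coincide, $A_1\equiv A_2$, so $M_1$ and $M_2$ are entropy solutions of the same scalar conservation law with identical data and therefore agree by Kruzhkov uniqueness \cite{Kruzkov1970}, whence $(\rho_1,u_1)=(\rho_2,u_2)$. The step I expect to be the main obstacle is the last portion of (b): transferring exponential decay of $\|v_1(t)-v_2(t)\|_{W^{s,2}}$ to a bound on $\sup_x|u_1-u_2|$ when the mass-coordinate maps $M_1(\cdot,t)$ and $M_2(\cdot,t)$ need not coincide, which couples the Hölder regularity of the $v_i(t)$ with the $L^1$-type stability of the conservation law; a minor additional point is checking that a general admissible (resp.\ weak) solution—not only those built in Theorem~\ref{t:existencerough}—satisfies the energy identity used in (c), which is immediate from the $L^2$-valued ODE formulation for bounded $\varphi$ and is built into the $v$-strong framework in the singular case.
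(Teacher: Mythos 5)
Your route through all three parts is essentially the paper's own: for (a) the weighted Poincar\'e inequality (Lemma \ref{l:poinc}) together with the energy identity and Gr\"onwall (Lemma \ref{l:alignment}), then the pushforward change of variables (Proposition \ref{p:stabadmis}); for (b) the spectral decomposition of $\Lambda_{\mathsf{N}}$ on the mean-zero subspace, decay of the form energy at rate $2\lambda_1$, and the Campanato--Morrey embedding $W^{s,2}(0,1)\hookrightarrow C^{0,s-\frac12}$ for $s>\frac12$ (Proposition \ref{p:stabstrong}); and (c) as a degenerate case (the paper simply calls it an immediate consequence of (a) and (b)). Your use of the smoothing bound $\|e^{-\tau\Lambda_{\mathsf{N}}}g\|_{W^{s,2}}\lesssim\tau^{-1/2}\|g\|_{L^2}$ to get $w(\tau)\in W^{s,2}$ for every $\tau>0$ is a valid alternative to the paper's simpler observation that $\omega(\tau)\in W^{s,2}$ for a.e.\ $\tau$ (coming from $v\in L^2_{\loc}(0,T;W^{s,2})$).

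Two small technical points. First, the constant in your Poincar\'e inequality is off: the Cauchy--Schwarz argument you outline gives
\begin{equation*}
\int_{(0,1)^2}\varphi(|m-m'|)\,|g(m)-g(m')|^2\,\dm\,\dm'\ \ge\ \frac{1}{c_\varphi}\int_0^1 |g|^2\,\dm,
\end{equation*}
not $\frac{2}{c_\varphi}$; this is exactly the paper's Lemma \ref{l:poinc}, and the extra factor of $2$ in the exponent is produced by the $\frac12$ in the differential energy identity \eqref{e:energyid2}, not by the Poincar\'e constant. Second---and this is the point you yourself flag as the main obstacle---the transfer from $\sup_m|v_1(m,t)-v_2(m,t)|$ to $\sup_x|u_1(x,t)-u_2(x,t)|$ in (b) is nontrivial when $M_1\ne M_2$. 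The paper is terse here (``all values of $u(\cdot,t)$ are inherited from values of $v(\cdot,t)$''), and your decomposition $u_1-u_2=(v_1-v_2)\circ M_1+\bigl(v_2\circ M_1-v_2\circ M_2\bigr)$ is the right way to make it precise. Note, however, that the H\"older estimate on the second term controls it by $[v_2(t)]_{C^{0,s-1/2}}\lesssim e^{-\lambda_1(t-\tau)}\|v_2(\tau)-\bar u_2\|_{W^{s,2}}$, which is \emph{not} of the form $e^{-\lambda_1(t-\tau)}\|v_1(\tau)-v_2(\tau)\|_{W^{s,2}}$: if $v_1(\tau)=v_2(\tau)$ but $M_1\ne M_2$, the pointwise-in-$x$ difference of the $u_i$ need not vanish. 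So the stability estimate in (b) should be read with an implicit constant that may also involve $\|v_2(\tau)-\bar u_2\|_{W^{s,2}}$; for the alignment statement (constant $v_2\equiv\bar u_1$) the second term vanishes and the issue disappears, which is precisely the case the conclusion cares most about.
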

	Theorem \ref{t:alignmentrough} is proved in Section \ref{s:LongTime}, Proposition \ref{p:stabadmis} (a) and Proposition \ref{p:stabstrong} (b). Item (c) is an immediate consequence of (a) and (b).
	
	\begin{rem}\rm
		It is worthwhile to note that while weak solutions $(v,M)$ in Theorem \ref{t:existencerough}(a) are unique up to perturbation on Lebesgue-null sets, jumps of the cumulative distribution function $M$ produce non-uniqueness even in the $\rho$-a.e. sense. This is circumvented in Theorem \ref{t:alignmentrough}(a) by taking regular $M$ in admissible solutions and in (b) by taking the continuous representative $v$.    
	\end{rem}
	
	\section{Global-in-Time Classical Solutions for Regular Topological Protocols}
	
	\label{s:threshold}
	\noindent
	This section is dedicated to the discussion of classical solutions related to topological protocols in 1D. In particular, we shall prove Theorem \ref{t:classical}.
	
	The initial value problem for the system \eqref{e:EA}, written in Lagrangian coordinates (with reference domain $\mathbb{R}^d$ and reference density $\rho_0$) is given by 
	\begin{equation}
		\label{e:Lagrangian}
		\begin{cases}
			\dot{X}(\alpha,t) = V(\alpha, t), \\
			\dot{V}(\alpha,t) = -\int_{\mathbb{R}^d} \phi(X(\alpha, t), X(\gamma, t), t)(V(\alpha,t) - V(\gamma, t)) \rho_0(\gamma) \dd \gamma \\
			X(\alpha, 0) = \alpha, \quad V(\alpha, 0) = u_0(\alpha).
		\end{cases}
	\end{equation}
	Given a classical solution $(X,V)$ of \eqref{e:Lagrangian}, it is clear that one obtains a classical solution $(\rho, u)$ of \eqref{e:EA} through the relationships $\rho(X(\alpha, t),t) \det \nabla_\alpha X(\alpha, t) = \rho_0(\alpha)$ and $u(X(\alpha, t), t) = V(\alpha, t)$, on any time interval $[0,T)$ on which the following conditions are satisfied:
	\begin{enumerate}[label = (\alph*)]
		\item $\alpha\mapsto X(\alpha,t)$ remains invertible.
		\item $\alpha\mapsto \det \nabla_\alpha X(\alpha, t)$ remains bounded from below.
	\end{enumerate}

	When $d = 1$, conservation of mass implies that 
	\[
	\dd_{\rho(t)}(X(\alpha, t), X(\gamma, t)) = \dd_{\rho_0}(\alpha, \gamma),
	\]
	so that under the assumption \eqref{e:topokernel}, we have 
	\[
	\phi(X(\alpha, t), X(\gamma, t), t)
	= \varphi(\!\dd_{\rho_0}(\alpha, \gamma), X(\alpha, t) - X(\gamma, t)).
	\]
	Consequently, when $\varphi(\!\dd, z)$ is Lipschitz in $\dd$ and bounded in $z$, equation \eqref{e:Lagrangian}$_2$ can be rewritten (still under the assumption that $d = 1$) as 
	\[
	\frac{\dd}{\dt} \bigg[ V(\alpha, t) + \int_{\mathbb{R}} \Phi(\!\dd_{\rho_0}(\alpha, \gamma), X(\alpha, t) - X(\gamma, t)) \rho_0(\gamma) \dd\gamma\bigg] = 0,
	\]
	where $\Phi(\!\dd,z) = \int_0^z \varphi(\!\dd, \zeta)\dd\zeta$, as in \eqref{e:psi0def}.  Therefore, under the hypotheses of Theorem \ref{t:classical}, the system \eqref{e:Lagrangian} reduces to 
	\begin{equation}
		\label{e:1DLagrangian}
		\begin{cases}
			\dot{X}(\alpha, t) = \psi_0(\alpha) - \int_{\mathbb{R}} \Phi(\!\dd_{\rho_0}(\alpha, \gamma), X(\alpha, t) - X(\gamma, t)) \rho_0(\gamma) \dd\gamma, \\
			X(\alpha, 0) = \alpha.
		\end{cases}
	\end{equation}
	It is clear (e.g. from the Picard Theorem) that this system has a global-in-time classical solution $X(\alpha, t)$ under our working hypotheses.  We show that it translates to a global-in-time solution $(\rho, u)$ of the Euler Alignment system, whenever $\psi_0$ is nondecreasing.  It suffices to verify the conditions (a) and (b) on the flow map $X(\cdot, t)$ that were mentioned above.  We do both simultaneously by providing a lower bound on the difference quotient $\frac{X(\beta, t) - X(\alpha, t)}{\beta - \alpha}$, for $\alpha<\beta$.  We have
	\begin{align*} 
		\frac{\dd}{\dt} \big[ X(\beta,t) - X(\alpha,t) \big] 
		& = \psi_0(\beta) - \psi_0(\alpha) - \int_\R \bigg(\Phi\big( \!\dd_{\rho_0}(\beta, \gamma), X(\beta, t) - X(\gamma,t)\big) \\
		& \hspace{43 mm} - \Phi\big( \!\dd_{\rho_0}(\alpha, \gamma), X(\alpha, t) - X(\gamma,t)\big)\bigg) \rho_0(\gamma) \dd\gamma \\
		& = \psi_0(\beta) - \psi_0(\alpha) - \int_\mathbb{R} \int_{\alpha}^{\beta} \frac{\dd}{\dd\xi} \Phi(\!\dd_{\rho_0}(\xi,\gamma), X(\xi,t) - X(\gamma,t)) \dd\xi \,\rho_0(\gamma) \dd\gamma .
	\end{align*} 
	Furthermore,
	\begin{align*}
		\frac{\dd}{\dd\xi} \Phi(\!\dd_{\rho_0}(\xi,\gamma), X(\xi,t) - X(\gamma,t)) 
		& = \partial_1 \Phi(\!\dd_{\rho_0}(\xi, \gamma), X(\xi,t) - X(\gamma, t)) \cdot \frac{\dd}{\dd\xi} \bigg| \int_{\gamma}^\xi \rho_0 \bigg| \\
		& \qquad + \varphi(\!\dd_{\rho_0}(\xi, \gamma), X(\xi,t) - X(\gamma, t)) \partial_\xi X(\xi,t) \\[3ex]
		& = \bigg( \int_0^{X(\xi,t) - X(\gamma, t)} \partial_1 \varphi(\!\dd_{\rho_0}(\xi, \gamma), \zeta) \dd\zeta \bigg) \rho_0(\xi)\sgn(\xi - \gamma)  \\[1ex]
		&  \qquad + \varphi(\!\dd_{\rho(t)}(X(\xi,t), X(\gamma,t)), X(\xi,t) - X(\gamma, t)) \partial_\xi X(\xi,t).
	\end{align*}
	
	Since $\varphi$ is nonincreasing in its first component by assumption, it follows that the first term on the right is nonpositive, so that, putting everything together, we obtain 
	
	\begin{align*}
		\frac{\dd}{\dt} \big[ X(\beta,t) - X(\alpha,t) \big]  & = \psi_0(\beta) - \psi_0(\alpha) + \int_\mathbb{R} \int_{\alpha}^{\beta} \bigg| \int_0^{X(\xi,t) - X(\gamma,t)} \partial_1 \varphi(\dd_{\rho_0}(\xi,\gamma),\zeta) \dd \zeta \bigg| \rho_0(\xi)  \rho_0(\gamma) \dd\xi \dd \gamma \\
		& \qquad - \int_\mathbb{R} \int_{X(\alpha,t)}^{X(\beta,t)} \varphi(\!\dd_{\rho(t)}(y, X(\gamma,t)), y - X(\gamma,t))  \dy\,\rho_0(\gamma) \dd \gamma \\
		& \ge \psi_0(\beta) - \psi_0(\alpha) - \|\varphi\|_{L^\infty} \big[ X(\beta,t) - X(\alpha,t) \big],
	\end{align*} 
	where we have dropped the second term entirely.  Gr\"onwall's inequality then implies that 
	\[
	\frac{X(\beta, t) - X(\alpha, t)}{\beta - \alpha} \ge \exp(-\|\varphi\|_{L^\infty} t) + \frac{\psi_0(\beta) - \psi_0(\alpha)}{\beta - \alpha} \cdot \frac{ 1 - \exp(-\|\varphi\|_{L^\infty}t)}{\|\varphi\|_{L^\infty}}.
	\]
	This proves Theorem \ref{t:classical}.
	
	\begin{rem}\rm
		Defining
		\[
		\psi(x,t) = u(x,t) + \int_{\mathbb{R}} \Phi(\!\dd_{\rho(t)}(x,y), x-y) \rho(y,t)\dy, 
		\]
		we have
		\[
		\partial_t \psi + u \partial_x \psi = 0.
		\]    
		Consequently, $e = \partial_x \psi$ satisfies 
		\[
		\partial_t e + \partial_x (eu) = 0,
		\]
		provided that $\rho$ and $u$ are sufficiently regular.  The analysis of threshold conditions in Euler Alignment systems is often formulated in terms of $e$ (and sometimes $q = \frac{e}{\rho}$, which satisfies $\partial_t q + u \partial_x q = 0$) rather than $\psi$.  
	\end{rem}

	\begin{rem}\rm
		\label{r:classicalnegative}
		For protocols of the form $\phi = \phi(|x-y|)$, the second term in the expansion of $\frac{\dd}{\dt} \big[ X(\beta,t) - X(\alpha, t)\big]$ (which we dropped) is absent, so that $\psi_0(\beta) - \psi_0(\alpha)$ serves as an \textit{upper} bound on this time derivative.  If $\psi_0(\beta) - \psi_0(\alpha)$ is negative, then the solution must eventually lose regularity in this case, at or before time $T = \frac{\psi_0(\alpha) - \psi_0(\beta)}{\beta - \alpha}$.  This gives rise to a clean, critical threshold condition: regularity persists if $\psi_0$ is everywhere nondecreasing, but not if it decreases somewhere.  In the case where $\phi$ has a topological component, the sign-definite term that we dropped complicates the picture. A better understanding of this term would allow us to make more precise statements involving sufficient conditions for finite-time loss of regularity.  For instance, if we know \textit{a priori} that $\diam\supp(\rho(t))\le \cD$ for all $t$ in the time interval of regularity, then the term in question is bounded above by $\|\partial_1 \varphi\|_{L^\infty} \cD \dd_{\rho_0}(\alpha,\beta)$, and a sufficient condition for finite-time loss of regularity is 
		\[
		\psi_0(\beta) - \psi_0(\alpha) < - \|\partial_1 \varphi\|_{L^\infty} \cD \dd_{\rho_0}(\alpha, \beta),
		\]
		for some $\alpha, \beta\in \mathbb{R}$, with $\alpha<\beta$.
		Up to minor technical details, this condition can be reformulated as 
		\[
		q_0(\alpha) < - \|\partial_1 \varphi\|_{L^\infty} \cD,
		\]
		for some $\alpha\in \mathbb{R}$. 
	\end{rem}
	
	\section{Weak/Mass-Distributional, $v$-Strong, and Admissible Solutions}
	
	\label{s:Equivalence}
	
	In this section, we introduce and discuss three notions of solutions for each of the systems \eqref{e:EApuretopo} and~\eqref{e:reformulation}.  We start by defining weak, $v$-strong, and admissible solutions for \eqref{e:reformulation} in Section \ref{ss:vMsolutions}.  In Section \ref{ss:EAsolutions}, we introduce `mass-distributional' solutions $(\rho, u)$ as definitionally equivalent to weak solutions $(v,M)$ of~\eqref{e:reformulation}, and similarly for $v$-strong mass-distributional solutions.  Our definition of `admissible solutions' $(\rho, u)$ to~\eqref{e:EApuretopo}, on the other hand, is given directly in terms of the system \eqref{e:EApuretopo} itself rather than passing through the reformulation \eqref{e:reformulation}.  We prove in Section \ref{ss:equivalence} that there is a natural correspondence between the admissible solutions of \eqref{e:EApuretopo} and the admissible solutions of \eqref{e:reformulation}.
	
	\subsection{Solutions of the $(v,M)$ System}
	\label{ss:vMsolutions}
	
	\begin{defi}
		\label{def:vweak}
		We say that a function $v\in L^\infty((0,1)\times (0,T))$  is a weak solution of the equation \eqref{e:reformulation}$_1$ if for all $\eta\in C^\infty_c((0,1)\times (0,T))$, we have 
		\begin{equation}
			\label{e:weakref}
			\begin{split}
				0 = & \int_0^T\int_0^1\partial_t\eta\ v\dd m\dd t\\
				& +\frac{1}{2}\int_0^T\int_{(0,1)\times (0,1)} \varphi(|m-m'|)\big(v(m,t)-v(m',t)\big)\big(\eta(m,t)-\eta(m',t)\big)\dd m\dd m'\dd t.
			\end{split}
		\end{equation}
	\end{defi}
	Note the natural symmetrization here, which is superfluous if $\varphi$ is bounded but becomes crucial if $\varphi$ has a strong singularity at zero.
	
	\begin{defi}
		\label{def:Kruzkov}
		Given a measurable function $A:[0,1]\times (0,\infty)\to \mathbb{R}$ which is Lipschitz in its first argument, uniformly with respect to time, we say that a bounded, measurable function $M:\mathbb{R}\times (0,T)\to \mathbb{R}$ is an \textit{entropy solution} of the scalar conservation law $\partial_t M + \partial_x(A(M,t)) = 0$ (in the sense of Kruzhkov) if  the following inequality is satisfied in the sense of distributions. 
		\begin{equation} 
			\label{e:Kruzkov}
			\partial_t |M - m| + \partial_x\big[  \sgn(M - m)\big(A(M,t) - A(m,t)\big) \big] \le 0,
			\qquad \text{ for all } m\in \mathbb{R}.
		\end{equation} 
	\end{defi}
	
	\begin{defi}[Weak solution of \eqref{e:reformulation}]
		\label{def:vMweak}
		Assume $v_0\in L^\infty(0,1)$; let $M_0:\mathbb{R}\to \mathbb{R}$ be a right-continuous, nondecreasing function with $\lim_{x\to -\infty} M_0(x) = 0$ and $\lim_{x\to +\infty} M_0(x) = 1$. We say that $(v,M)$ is a weak solution of \eqref{e:reformulation} on the time interval $[0,T)$ and associated to the initial data $(v_0, M_0)$ if the following conditions are satisfied.
		\begin{enumerate}[label = (\roman*)]
			\item For a.e. $t\in (0,T)$, the function $x\mapsto M(x,t)$ is right-continuous and nondecreasing, with $\lim_{x\to-\infty}M(x,t)=0$ and $\lim_{x\to+\infty}M(x,t)=1$.  We also require $M\in C([0,T);L^1_{\loc}(\mathbb{R}))$ and $v\in L^\infty((0,1)\times(0,T))$.
			\item As $t\to 0+$, we have $v(\cdot, t)\to v_0$ in $L^2(0,1)$ and $M(\cdot, t)\to M_0$ in $L^1_{\loc}(\mathbb{R})$.
			\item The function $v$ is a weak solution of \eqref{e:reformulation}$_1$ in the sense of Definition \ref{def:vweak}.
			\item For $A:[0,1]\times (0,T)\to \mathbb{R}$ defined by $A(m,t) = \int_0^m v(m',t) \dm'$, we have that $M$ is an entropy solution of the scalar conservation law $\partial_t M + \partial_x(A(M,t)) = 0$ in the sense of Definition \ref{def:Kruzkov}.
		\end{enumerate}
	\end{defi}
	
	\begin{rem}[Existence of weak solutions]\label{r:weakexist}\rm
		Definition \ref{def:vMweak} is the natural (and in some sense minimal) way to define a solution of the initial value problem \eqref{e:reformulation} using Definitions \ref{def:vweak} and \ref{def:Kruzkov}. 
		\begin{itemize}
			\item Under assumptions (i) and (ii), with bounded communication $\varphi$, standard arguments ensure the existence of a unique strong solution in the class $C([0,T);L^2(0,1))$. This solution satisfies the energy equality and preserves the mean value, as required in item~(c) of Theorem~\ref{t:existencerough}. Moreover, any weak solution in the sense of Definition~\ref{def:vweak} that satisfies assumptions (i) and~(ii) necessarily coincides with this strong solution.
			\item The requirement that $v\in L^\infty$ guarantees that $A$ is Lipschitz in $m$, uniformly with respect to time.  Under the assumptions on $M_0$ and $A$ (through $v$), we are therefore always guaranteed the existence of a unique entropy solution of the scalar balance law in the sense of Definition \ref{def:Kruzkov}, by the standard Kruzhkov theory \cite{Kruzkov1970}, for as long as $v$ exists and remains in $L^\infty$.  From the point of view of the reformulated $(v,M)$ system, then, the remaining existence and uniqueness issues are entirely due to $v$.  
		\end{itemize}
		
		On the other hand, when we pass back to the level of the original variables $\rho$ and $u$, the cases where either $M$ is absolutely continuous in space or $v$ is regularized by a singular kernel $\varphi(r) = C_s r^{-1-2s}$, $s\in (0, 1)$ are especially favorable.  We therefore distinguish such solutions with their own terminology.
	\end{rem}
	
	\begin{defi}
		\label{def:admisr}
		Assume that $(v,M)$ is a weak solution of \eqref{e:reformulation} associated to the initial data $(v_0, M_0)$ in the sense of Definition \ref{def:vMweak}.
		\begin{enumerate}[label = (\alph*)]
			\item If $x\mapsto M(x,t)$ is absolutely continuous for a.e. $t\in (0,T)$, then we refer to $(v,M)$ as an `\textit{admissible} weak solution' associated to $(v_0, M_0)$.
			\item Alternatively, in the singular case $\varphi(r)=C_sr^{-1-2s}$, suppose that $v$ is a strong solution in the sense of Definition \ref{def:strongheat}.  Then we refer to $(v,M)$ as a `$v$-strong' solution.
		\end{enumerate} 
	\end{defi}
	
	Solutions that we call `$v$-strong' belong to the Sobolev space $W^{s,2}(0,1)$, which embeds in the space of continuous functions on $(0,1)$ if $s\in (\frac12, 1)$.  If a strong solution $v$ (in the sense of Definition \ref{def:strongheat}) is a.e. equal to a continuous function, we will always identify $v$  with its continuous representative. 
	
	\subsection{Solutions of the 1D Euler Alignment System}
	
	\label{ss:EAsolutions}
	
	\begin{defi}[Mass-distributional solution of \eqref{e:EApuretopo}]
		\label{def:MassDistributional}
		Let $\rho_0$ be a probability measure on $\mathbb{R}$ and assume $u_0\in L^\infty(\mathbb{R}, \rho_0)$.  We say that the pair $(\rho, u)$ is a `mass-distributional solution' of \eqref{e:EApuretopo} associated to the initial data $(\rho_0, u_0)$ if there exists a corresponding weak solution pair $(v,M)$ for the system~\eqref{e:reformulation} (in the sense of Definition \ref{def:vMweak}) that satisfies $\rho = \partial_x M$ (as distributions) and $u(x,t) = v(M(x,t),t)$ (pointwise), with initial conditions $(v_0, M_0)$ satisfying $\rho_0 = \partial_x M_0$ (as distributions) and $v_0 = u_0\circ M_0$ (pointwise). We say that a mass-distributional solution $(\rho, u)$ is `$v$-strong' if the corresponding $(v,M)$ is a $v$-strong solution in the sense of Definition \ref{def:admisr}.
	\end{defi}
	
	\begin{rem}\label{bonafide}\rm
		As previewed in the Introduction, the word `solution' in Definition \ref{def:MassDistributional} should be taken with a grain of salt, since the requirements of the definition are really imposed on the pair $(v,M)$ rather than on $(\rho, u)$.  Of course, if $\rho$ and $u$ happen to possess enough additional regularity, then the pair $(\rho, u)$ can be upgraded to a \textit{bona fide} solution in a more reasonable sense---for example, an `admissible' solution in the sense of Definition \ref{def:admist} below.  We also note, however, that interpreting the sense in which a general mass-distributional solution (with possibly measure-valued density) satisfies the momentum equation is likely to require a careful revisiting of the definition of $\dd_{\rho(t)}(x,y)$. It is not clear whether there is a choice that makes the distributional formulation of \eqref{e:EApuretopo}$_2$ compatible with the $v$ equation in this level of generality. 
	\end{rem}    
	
	\begin{defi}
		\label{def:admist}
		Given $(\rho_0, u_0)$ satisfying $\rho_0\in L^1(\mathbb{R})$, $\rho_0\ge 0$, $\int_{\mathbb{R}} \rho_0 = 1$, and $u_0\in L^\infty(\mathbb{R}, \rho_0)$, we say that the pair $(\rho, u)$ is an admissible weak solution of \eqref{e:EApuretopo} on the time interval $[0,T)$ and associated to the initial data $(\rho_0, u_0)$, if the following conditions are satisfied.
		\begin{enumerate}[label = (\roman*)]
			\item We have $\rho \in C([0,T); L^1(\R))$, with $\rho \ge 0$ and $\int_{\mathbb{R}} \rho(y,t) \dy = 1$ for a.e. $t\in (0,T)$, and $u$ is Borel measurable, with $\|u(\cdot, t)\|_{L^\infty(\mathbb{R}, \rho(\cdot, t))}\leq C$ for a.e. $t\in (0,T)$ and some constant $C$. 
			\item As $t\to 0+$, we have $\rho(t)\to \rho_0$, $\rho u(t)\to \rho_0 u_0$, and $(\rho|u|^2)(t)\to \rho_0 |u_0|^2$ in the sense of distributions.
			\item The continuity equation $\partial_t \rho + \partial_x (\rho u) = 0$ is satisfied on $\mathbb{R} \times (0,T)$ in the sense of distributions.  
			\item  The momentum equation \eqref{e:EA}$_2$ is satisfied in the following weak sense: For all $\eta\in C_c^\infty((0,1)\times(0,T))$ and $\psi(x,t):=\eta(M(x,t),t)$ we require
			\begin{equation}
				\label{e:weaktopol}
				\begin{split}
					0 = & \int_0^T\int_\R(\partial_t\psi  +u\partial_x\psi)\rho u \dd x\dd t \\
					& + \frac{1}{2}\int_0^T\int_{\R\times \mathbb{R}}\varphi(\!\dd_{\rho(t)}(x,y))\big(u(x,t)-u(y,t)\big)\big(\psi(x,t)-\psi(y,t)\big)\rho(x,t)\rho(y,t)\dd x\dd y\dd t.
				\end{split}
			\end{equation}
			
			\noindent
			(We require in particular that both sides of \eqref{e:weaktopol} must be well-defined and finite for each $\psi$ of the specified form.)
		\end{enumerate}
	\end{defi}
	
	\begin{rem}\rm
		An approximation argument shows that if \eqref{e:EA}$_2$ is satisfied in the sense of distributions, then item (iv) in Definition \ref{def:admist} is also satisfied.  The class of test functions used here has been tailored to be compatible with the reformulated system \eqref{e:reformulation}.
	\end{rem}
	
	\begin{rem}\rm
		By definition, mass-distributional solutions $(\rho, u)$ of \eqref{e:EApuretopo} are in natural correspondence with weak solutions $(v,M)$ of \eqref{e:reformulation}.   According to the proposition in the next subsection, admissible solutions $(\rho, u)$ of \eqref{e:EApuretopo} are in natural correspondence with admissible solutions of \eqref{e:reformulation}.  It follows that a mass-distributional solution $(\rho, u)$ of \eqref{e:EApuretopo} is in fact admissible if $\rho(\cdot, t)\in L^1(\mathbb{R})$ for a.e. $t\in (0,T)$, while any admissible solution of \eqref{e:EApuretopo} is also a mass-distributional solution.
	\end{rem}
	
	\subsection{Equivalence for Admissible Solutions}
	
	\label{ss:equivalence}
	
	\begin{prop} 
		\label{p:equivalence}
		The two systems  \eqref{e:EApuretopo} and \eqref{e:reformulation} are `equivalent for admissible weak solutions' in the following sense.
		\begin{enumerate}[label = (\alph*)]
			\item Suppose $(\rho, u)$ is an admissible weak solution of \eqref{e:EApuretopo} in the sense of Definition \ref{def:admist}, on the interval $[0,T)$ and associated to the initial data $(\rho_0, u_0)$.  Define $M(\cdot, t)$ and its (left-continuous) generalized inverse $M^{-1}(\cdot, t)$ via 
			\[
			M(x,t) = \int_{-\infty}^x \rho(y,t)\dy,\qquad 
			M^{-1}(m,t) = \inf\{ x\in \mathbb{R}: M(x,t)\ge m\}.
			\]
			Define $M_0$ and $M_0^{-1}$ similarly in terms of $\rho_0$.  Finally, define $v$ and $v_0$ via 
			\[ 
			v(m,t) = u(M^{-1}(m,t),t),
			\qquad 
			v_0(m) = u_0(M_0^{-1}(m)).
			\]
			Then $(M,v)$ is an admissible weak solution of \eqref{e:reformulation} on $[0,T)$ in the sense of Definition \ref{def:admisr} associated to the initial conditions $(M_0, v_0)$.
			\item Conversely, suppose that $(v,M)$ is an admissible weak solution of \eqref{e:reformulation} on $[0,T)$ in the sense of Definition \ref{def:admisr}, with initial conditions $(M_0, v_0)$. Define $\rho$, $u$, $\rho_0$, and $u_0$ via  
			\[
			\rho(x,t) = \partial_x M(x,t),
			\qquad 
			\rho_0(x) = \partial_x M_0(x),
			\qquad
			u(x,t) = v(M(x,t),t),
			\qquad 
			u_0(x) = v_0(M_0(x)).
			\]
			Then $(\rho, u)$ is a weak solution of \eqref{e:EApuretopo} in the sense of Definition \ref{def:admist}, associated to the initial conditions $(\rho_0, u_0)$.
		\end{enumerate}
	\end{prop}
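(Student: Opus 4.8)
The plan is to reduce both implications to a single change-of-variables dictionary between physical coordinates $x$ (carrying the measure $\rho(\cdot,t)\,\dd x$) and mass coordinates $m\in(0,1)$, implemented by the cumulative distribution $M(\cdot,t)$ and its left-continuous generalized inverse $M^{-1}(\cdot,t)$. The facts I would record first, as elementary lemmas, are: (i) $M(\cdot,t)_\sharp\big(\rho(\cdot,t)\,\dd x\big)=\mathcal{L}\mres(0,1)$, so that $\int_\R f(x)\,\rho(x,t)\,\dd x=\int_0^1 f\big(M^{-1}(m,t)\big)\,\dd m$ for $\rho(\cdot,t)$-integrable $f$, the only caveat being that on the ($\rho$-null, relatively open) plateaus of $M$ the identity $v(M(x,t),t)=u(x,t)$ holds merely $\rho(\cdot,t)$-a.e.; (ii) $\dd_{\rho(t)}(x,y)=|M(x,t)-M(y,t)|$, by conservation of mass in $1$D; and (iii) if $x\mapsto M(x,t)$ is absolutely continuous, then so is $x\mapsto A(M(x,t),t)$, with $\partial_x\big(A(M(x,t),t)\big)=v(M(x,t),t)\,\partial_x M(x,t)=(\rho u)(x,t)$, since $v\in L^\infty$ makes $A$ Lipschitz in $m$. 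With these in hand, the two parts become mirror images of each other.

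For part (a), given an admissible weak solution $(\rho,u)$ of \eqref{e:EApuretopo} in the sense of Definition \ref{def:admist}, I would set $M=\int_{-\infty}^{\cdot}\rho$; it is nondecreasing, uniformly continuous, tends to $0$ and $1$ at $\mp\infty$, and $\partial_x M=\rho$, so $M(\cdot,t)$ is absolutely continuous. Integrating the distributional continuity equation in $x$ — the flux vanishes at $\pm\infty$ for a.e.\ $t$ since $\rho u\in L^1(\dd x)$ — yields $\partial_t M+\partial_x\big(A(M,t)\big)=0$ in the sense of distributions. Because $M(\cdot,t)$ is continuous in $x$ and monotone, this weak solution is automatically a Kruzhkov entropy solution: mollifying in $x$, the commutator $A(M,t)*\omega_\epsilon-A(M_\epsilon,t)$ tends to $0$ (using uniform continuity of $M$ and Lipschitz continuity of $A$ in $m$), and multiplying the regularized equation by $\eta'(M_\epsilon)$ for a smooth convex $\eta$ and using $\partial_x M_\epsilon\ge 0$ produces, in the limit, the inequality \eqref{e:Kruzkov} (indeed with equality). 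Setting $v=u\circ M^{-1}$ gives $v\in L^\infty((0,1)\times(0,T))$ by (i). To obtain the weak formulation \eqref{e:weakref} for $v$, I would insert $\psi=\eta\circ M$ into \eqref{e:weaktopol}: the interaction triple integral transforms term by term via (i)–(ii), while in the transport integral the chain rule together with $\partial_t M+\partial_x\big(A(M,t)\big)=0$ collapses $\partial_t\psi+u\,\partial_x\psi$ to $\eta_t(M(x,t),t)$, which pushes forward to $\int\!\!\int\eta_t\,v\,\dd m\,\dd t$. For the initial data, $\|\rho(t)-\rho_0\|_{L^1}\to 0$ forces $M(\cdot,t)\to M_0$ uniformly, hence $M^{-1}(\cdot,t)\to M_0^{-1}$ a.e.\ on $(0,1)$; combining this with the distributional limits $\rho u(t)\to\rho_0 u_0$, $\rho|u|^2(t)\to\rho_0|u_0|^2$ and the uniform bound $\int_\R\rho|u|^2(t)\,\dd x\le C^2$ from Definition \ref{def:admist}(i), a weak-$L^2$-convergence-plus-convergence-of-norms argument (using $\int_0^1|v(m,t)|^2\,\dd m=\int_\R\rho|u|^2(x,t)\,\dd x$) yields $v(\cdot,t)\to v_0$ in $L^2(0,1)$, while $M(\cdot,t)\to M_0$ in $L^1_{\loc}$. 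This verifies all of Definition \ref{def:vMweak}, and since $M(\cdot,t)$ is absolutely continuous, $(v,M)$ is admissible in the sense of Definition \ref{def:admisr}.

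Part (b) runs the dictionary backwards. Given an admissible weak solution $(v,M)$ of \eqref{e:reformulation}, I would put $\rho=\partial_x M\ge 0$ (in $L^1$, since $M(\cdot,t)$ is absolutely continuous and nondecreasing with total mass $1$) and $u=v\circ M$; then $\|u(\cdot,t)\|_{L^\infty(\R,\rho(\cdot,t))}=\|v(\cdot,t)\|_{L^\infty(0,1)}$, and $\rho\in C([0,T);L^1(\R))$ follows from the $L^1$-contraction and $BV$ bounds for the scalar conservation law, together with $\mathrm{TV}(M_0)=1$ and the absolute continuity built into admissibility. Since an entropy solution is in particular a weak solution, differentiating $\partial_t M+\partial_x\big(A(M,t)\big)=0$ in $x$ and using fact (iii) gives $\partial_t\rho+\partial_x(\rho u)=0$ distributionally. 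The momentum equation \eqref{e:weaktopol} is obtained by reversing the substitution from part (a): for $\psi=\eta\circ M$, the interaction term of \eqref{e:weakref} pulls back exactly to that of \eqref{e:weaktopol} and is finite because $|\eta(m)-\eta(m')|\le\|\partial_m\eta\|_{L^\infty}|m-m'|$ while $\int\!\!\int\varphi(|m-m'|)|m-m'|^2\,\dd m\,\dd m'<\infty$ both for bounded $\varphi$ and for $\varphi(r)=C_sr^{-1-2s}$ with $s<1$; the transport term is finite and equals $\int\!\!\int(\partial_t\psi+u\,\partial_x\psi)\rho u\,\dd x\,\dd t$ by the same chain-rule collapse as before. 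The initial conditions $\rho(t)\to\rho_0$, $\rho u(t)\to\rho_0 u_0$, $\rho|u|^2(t)\to\rho_0|u_0|^2$ in the sense of distributions follow from $M(\cdot,t)\to M_0$ in $L^1_{\loc}$ (hence $M^{-1}(\cdot,t)\to M_0^{-1}$ a.e.) and $v(\cdot,t)\to v_0$ in $L^2(0,1)$, by testing against $C_c^\infty$ functions and invoking dominated convergence.

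The step I expect to be the main obstacle is making the manipulations with the composed test function $\psi(x,t)=\eta(M(x,t),t)$ fully rigorous: a priori $M$ is only $C([0,T);L^1_{\loc})$ in time, so $\partial_t\psi=\eta_t(M,t)+\eta_m(M,t)\,\partial_t M$ is merely a distribution, and the cancellation $\partial_t\psi+u\,\partial_x\psi=\eta_t(M,t)$ rests on interpreting the low-regularity product $\eta_m(M,t)\,\partial_t M=-\eta_m(M,t)\,\partial_x\big(A(M,t)\big)$. I would resolve this by first observing that the solutions of interest are in fact Lipschitz in time with values in $L^1$ (via $\mathrm{TV}(M(\cdot,t))=1$ and $L^1$-contraction), so that $\partial_t M$ is a genuine locally bounded function and the chain rule is classical, and then passing to the general case by mollification in time; the closely related point in part (a), that a continuous-in-$x$, monotone weak solution of the conservation law with time-dependent flux $A(\cdot,t)$ is a Kruzhkov entropy solution, is handled by the mollification-in-$x$ argument sketched above. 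Everything else reduces to routine bookkeeping with generalized inverses and changes of variables.
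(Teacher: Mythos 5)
Your proposal follows essentially the same route as the paper's proof: push the weak formulation back and forth via $\psi=\eta\circ M$, identify the continuous, monotone weak solution $M$ of the conservation law with the Kruzhkov entropy solution, establish the initial trace for $v$ by combining weak $L^2$ convergence with convergence of $L^2$ norms, and use convergence in measure of $M^{-1}(\cdot,t)$ together with uniform integrability (Vitali) for the traces in part (b). One small imprecision: in justifying the chain rule you assert that $\partial_t M$ is ``locally bounded'' because $M$ is Lipschitz in time with values in $L^1$; that only yields $\partial_t M\in L^\infty_t L^1_x\subset L^1_{\loc}$, not $L^\infty_{\loc}$, which is precisely what the paper obtains (more economically, from $\partial_t M=-u\,\partial_x M$ with $u\in L^\infty(\rho)$ and $\partial_x M=\rho\in L^1$) before invoking the Sobolev chain rule for $W^{1,1}_{\loc}$ functions; your mollification-in-time fallback then covers the same ground.
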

	\begin{rem}\rm
		The above Proposition fashions associations $(\rho, u)\mapsto (M,v)$ and $(\widetilde{M},\widetilde{v})\mapsto (\widetilde{\rho}, \widetilde{u})$.  Proposition \ref{p:CDFprops}(d) guarantees that these two associations are inverses of one another. This is trivial for $\rho \leftrightarrow M$ but not completely obvious at the level of the velocities. 
	\end{rem}

	\begin{proof}[Proof of Proposition \ref{p:equivalence}]
		(a) Under the stated assumptions, it is standard that requirement (i) from Definition \ref{def:admist} implies requirement (i) from Definition \ref{def:vMweak}, and the validity of the continuity equation $\partial_t\rho + \partial_x(\rho u) = 0$ in the sense of distributions is equivalent to that of the transport equation $\partial_t M + u\partial_x M =~0$.  The definition of $v$ from (a) guarantees that $A(M(x,t),t) = \int_{(-\infty, x]} \rho u(x,t)\dx$ (cf. Proposition \ref{p:CDFprops}(e)), and thus that $\partial_x(A(M,t)) = \rho u(\cdot, t)$ in the sense of distributions.  As already noted in Remark \ref{r:weakexist}, the regularity of $M$ then guarantees that $M$ is the entropy solution of the scalar conservation law \eqref{e:reformulation}$_2$.
		
		To show that \(v(t)\to v_0\) in \(L^2(0,1)\) as \(t\to 0+\), we observe that the distributional convergence stated in item~(ii) of Definition~\ref{def:admist} can be strengthened to allow testing against constant functions. Indeed, by item~(i), the density \(\rho\) is continuous in \(L^1\) and the velocity \(u\) is uniformly bounded. These properties imply uniform integrability of the family \(\{x \mapsto |u(x,t)|^2 \rho(x,t)\}_{t\in[0,T)}\).
		Thus
		\[
		\|v(t)\|_{L^2(0,1)}^2 = \int_{\mathbb{R}} (\rho |u|^2)(t)\dx \to \int_{\mathbb{R}} \rho_0 |u_0|^2 \dx = \|v_0\|_{L^2(0,1)}^2,
		\]
		so it suffices to prove that $v(t)$ converges \textit{weakly} to $v_0$ in $L^2(0,1)$.  To this end, choose $\eta\in C^\infty_c((0,1))$.  We have
		\begin{align*}
			\bigg| \int_0^1 \eta(m)v(m,t) \dm - \int_0^1 \eta(m) v_0(m)\dm\bigg|  
			& \le \bigg| \int_{\mathbb{R}} \big[ \eta(M(x,t)) - \eta(M_0(x))\big] \rho u(x,t)\dx \bigg| \\
			& \qquad + 
			\bigg| \int_{\mathbb{R}} \eta(M_0(x)) \rho u(x,t) \dx - \int_{\mathbb{R}} \eta(M_0(x)) \rho_0 u_0(x) \dx  \bigg|.
		\end{align*}
		The second term on the right tends to zero as $t\to 0+$, by the assumed convergence of $\rho u(t)$ to $\rho_0 u_0$.  For the first term, we note that since $M(t)\to M_0$ in $L^1_{\loc}(\mathbb{R})$ and $v\in L^\infty$, it follows that $\{\rho u(t)\}_{t\in (0,T)}$ is uniformly integrable and $\eta\circ M(t)\to \eta\circ M_0$ in measure.  Combining these two facts (and remembering that $\eta$ is bounded) allows us to conclude that the first term on the right side of the inequality above also tends to zero as $t\to 0+$.  This establishes (ii).
		
		Once we have $\partial_t M + u \partial_x M = 0$, with $u\partial_x M\in L^1_{\loc}$, it follows that
		$\partial_t M\in L^1_{\loc}$ and thus $M\in W^{1,1}_{\loc}(\R\times(0,T))$.
		Therefore the Sobolev chain rule applies:
		$\partial_t(\eta(M,t))=\partial_m\eta(M,t)\partial_t M+\partial_t\eta(M,t)$ and
		$\partial_x(\eta(M,t))=\partial_m\eta(M,t)\partial_x M$, which yields
		\begin{equation}
			\label{e:etachain}
			\partial_t \big( \eta(M(x,t),t) \big) + u(x,t) \partial_x (\eta(M(x,t),t)) = \partial_t \eta(M(x,t),t),
		\end{equation} 
		in the sense of distributions.  Thus for $\psi(x,t) = \eta(M(x,t),t)$, we have 
		\[
		\int_0^T \int_0^1 \partial_t\eta\, v(m,t)\dm \dt
		= \int_0^T \int_{\mathbb{R}} \partial_t \eta(M(x,t),t) \rho u(x,t) \dx \dt
		= \int_0^T \int_{\mathbb{R}} (\partial_t \psi + u \partial_x \psi)(x,t) \rho u(x,t) \dx \dt,
		\]
		where in the first equality, we have used the fact that $M(t)\circ M^{-1}(t)$ is, for a.e. $t$, the identity map on $(0,1)$, since $x\mapsto M(x,t)$ is continuous for a.e. $t$.  The right hand side of the above is equal to $-\frac12$ times the integral below, which can be manipulated as follows:    
		\begin{align*} 
			& \int_0^T \int_{\mathbb{R}\times \mathbb{R}} \phi(x,y,t)\big(u(x,t) - u(y,t)\big)\big(\psi(x,t) - \psi(y,t) \big) \rho(x,t) \rho(y,t) \dx \dy \dt \\
			& = \int_0^T \int_{\mathbb{R}\times \mathbb{R}} \varphi(|M(x,t)-M(y,t)|) \big(u(x,t) - u(y,t) \big) \big( \eta(M(x,t),t) - \eta(M(y,t),t)\big) \rho(x,t) \rho(y,t) \dx \dy \dt \\
			& = \int_0^T \int_{(0,1)\times (0,1)} \varphi(|m-m'|) \big(u(M^{-1}(m,t),t) - u(M^{-1}(m',t),t) \big) \big( \eta(m,t) - \eta(m',t)\big) \dm \dm' \dt\\
			& = \int_0^T \int_{(0,1)\times (0,1)} \varphi(|m-m'|) \big(v(m,t) - v(m',t) \big) \big( \eta(m,t) - \eta(m',t)\big) \dm \dm' \dt.
		\end{align*}
		We conclude that $(v,M)$ satisfies the requirements of Definitions \ref{def:vMweak} and \ref{def:admisr}.
		
		\medskip
		(b) The requirement that $\rho\in C([0,T);L^1(\mathbb{R}))$ follows easily from the regularity requirements on $M$, and the continuity equation $\partial_t \rho + \partial_x (\rho u) = 0$ can be obtained by differentiating $\partial_tM + u\rho = 0$, which in turn follows from the distributional identity $\partial_x(A(M,t)) = u\rho$.  
		It is clear that $\rho(t)\to \rho_0$ in the sense of distributions; for $\rho u$ and $\rho |u|^2$, we argue as follows.  The fact that $M(t)\to M_0$ in $L^1_{\loc}(\mathbb{R})$ implies that $M^{-1}(t)\to M_0^{-1}$ in measure; therefore, $\eta\circ M^{-1}(t)\to \eta\circ M_0^{-1}$ in measure for any $\eta\in C^\infty_c(\mathbb{R})$.  Since $\eta$ is bounded, the family $\{\big|\eta\circ M^{-1}(t)\big|^2\}_{t\in (0,T)}$ is trivially uniformly integrable, so  Vitali's Convergence Theorem then guarantees that $\eta\circ M^{-1}(t)\to \eta\circ M_0^{-1}$ in $L^2((0,1))$ as $t\to 0+$.  Thus, 
		\[
		\int_{\mathbb{R}} \rho u(x,t) \eta(x) \dx = 
		\int_0^1 v(m,t) \eta(M^{-1}(m,t)) \dm \stackrel{t\to 0+}{\longrightarrow} \int_0^1 v_0(m) \eta(M_0^{-1}(m)) \dm
		= \int_{\mathbb{R}} \rho_0 u_0(x) \eta(x) \dx.
		\]
		A similar argument (using $L^1$ convergence of $|v(t)|^2$ to $|v_0|^2$) can be used to show that 
		\[
		\int_{\mathbb{R}} \rho |u|^2(x,t) \eta(x) \dx \stackrel{t\to 0+}{\longrightarrow}  \int_{\mathbb{R}} \rho_0 |u_0|^2(x) \eta(x) \dx.
		\]
		As for the momentum equation, we essentially reverse the steps from the proof of (a).   Given $\eta\in C_c^\infty((0,1)\times (0,T))$ and $\psi(x,t) = \eta(M(x,t),t)$, we have (using \eqref{e:etachain} and $M(t)_{\sharp} \rho(t) = \lambda$, by Proposition~\ref{p:CDFprops}(c)) that 
		\begin{small} 
			\begin{align*} 
				& \int_0^T \int_{\mathbb{R}} (\partial_t \psi + u \partial_x \psi) \rho u(x,t)\dx \dt 
				= \int_0^T \int_{\mathbb{R}} \partial_t \eta(M(x,t),t) v(M(x,t),t) \rho(x,t) \dx \dt \\
				& = \int_0^T \int_0^1 \partial_t \eta(m,t) v(m,t) \dm \dt = -\frac12\int_0^T \int_{(0,1)\times (0,1)} \varphi(|m-m'|) \big(v(m,t) - v(m',t) \big) \big( \eta(m,t) - \eta(m',t)\big) \dm \dm' \dt
				\\
				& = -\frac12\int_0^T \int_{\mathbb{R}\times \mathbb{R}} \varphi(|M(x,t) - M(y,t)|) \big( v(M(x,t),t) - v(M(y,t),t) \big) \big( \eta(M(x,t),t) - \eta(M(y,t),t) \big) \rho(x,t) \rho(y,t) \dx \dy \dt \\
				& = -\frac12\int_0^T \int_{\mathbb{R}\times \mathbb{R}} \phi(x,y,t)\big(u(x,t) - u(y,t)\big)\big(\psi(x,t) - \psi(y,t) \big) \rho(x,t) \rho(y,t) \dx \dy \dt.
			\end{align*} 
		\end{small} 
		Thus the requirements of Definition \ref{def:admist} are satisfied.    
	\end{proof}
	
	\section{The $v$ Equation with Singular Kernels:  Existence of Mass-Distributional Solutions}
	
	\label{s:vsolutions}
	In this section, we single out the special case where 
	\[
	\varphi(r) = C_s r^{-1-2s},
	\qquad r>0,
	\]
	for a fixed $s\in (0,1)$ and a corresponding normalizing constant $C_s$.  (The value of $C_s$ is basically inconsequential for our purposes, but for the sake of following standard conventions, we specify $C_s = \frac{s 4^s}{\sqrt{\pi}} \cdot \frac{\Gamma(\frac12 + s)}{\Gamma(1-s)}$.)  In this case, the integral on the right side of \eqref{e:reformulation}$_1$ can be understood in a principal value sense, and the entire system can be written as 
	\begin{equation}
		\label{eq:heatncons}
		\begin{cases}
			\p_t v = -\Lambda_{(0,1)}^s v \\
			\p_t M + \p_x(A(M,t)) = 0.
		\end{cases}
	\end{equation}
	
	In \eqref{eq:heatncons}$_1$, the operator $\Lambda_{(0,1)}^s$ is the \textit{regional fractional Laplacian} operator.  In general, the regional fractional Laplace operator $\Lambda_\Omega^s$ associated to an open, bounded subset $\Omega$ of $\mathbb{R}^d$ is given by 
	\[ 
	\Lambda^s_\Omega \xi (z) := C_{d,s} {\rm p.v.}\int_{\Omega}\frac{\xi(z)-\xi(z')}{|z-z'|^{d+2s}}\dd z' := \lim_{\epsilon\searrow 0} \int_{\Omega\cap\{|z-z'|>\epsilon\}} \frac{\xi(z)-\xi(z')}{|z-z'|^{d+2s}}\dd z',\quad z\in\Omega,
	\]
	for sufficiently regular $\xi:\O\to \mathbb{R}$.  Here $C_{d,s}$ is a normalizing constant, and $C_{1,s} = C_s$ in the notation introduced above. For more on the regional fractional Laplacian, we refer the reader to \cite{GalWarma2016}, \cite{SobolevHitchhiker}, and references therein. In what follows, we restrict attention to the case where $d = 1$ and $\Omega = (0,1)$, and we introduce only the tools we need for our analysis.  
	\subsection{The Regional Fractional Laplacian Operator}
	As mentioned in Section \ref{ss:SOR}, the domain we will use for $\Lambda^s$ will be a subspace of the  \textit{fractional Sobolev space} $(W^{s,2}(0,1), \|\cdot\|_{W^{s,2}(0,1)})$.  The latter is given by
	\[
	W^{s,2}(0,1) := \left\{v\in L^2(0,1):\quad \int_{(0,1)\times (0,1)} \frac{|v(m)-v(m')|^2}{|m-m'|^{1+2s}}\dm \dm'<\infty\right\}, 
	\]
	\[
	\|v\|_{W^{s,2}(0,1)}:= \left(\int_{(0,1)} |v|^2\dd x + \frac{C_{s}}{2} \int_{(0,1)\times (0,1)} \frac{|v(m)-v(m')|^2}{|m-m'|^{1+2s}}\dm \dm' \right)^\frac{1}{2}. 
	\]

	
	
	Given a fixed $s\in (0,1)$, we define the bilinear form $\cE^s$ via 
	\[
	\mathcal{E}^s(v,w) = \frac{C_{s}}{2}\int_{(0,1)\times (0,1)} \frac{(v(m)-v(m'))\cdot(w(m)-w(m'))}{|m-m'|^{1+2s}}\dm\dm',
	\qquad 
	v,w\in W^{s,2}(0,1).
	\]
	The definition of $\cE^s$ naturally gives rise to the corresponding definition of the domain $D(\Lambda^s)$ of the closed linear selfadjoint operator $\Lambda^s$, which we refer to as the \textit{Neumann regional fractional Laplacian} operator:
	\begin{equation}
		D(\Lambda^s) := \{v\in W^{s,2}(0,1)  :\ \exists f\in L^2(0,1) \text{ s.t. } \forall w\in W^{s,2}(0,1) \ \mbox{ we have }\ {\mathcal E}^s(v,w) = (f,w)_{L^2(0,1)}\}. 
	\end{equation}
	Finally, for $v\in D(\Lambda^s)$, we define $\Lambda^s(v)$ to be the unique element $f$ of $L^2(0,1)$ such that $\cE^s(v,w) = (f,w)_{L^2(0,1)}$ for all $w$ belonging to $W^{s,2}(0,1)$.  
	
	A more explicit interpretation of $D(\Lambda^s)$ for $s\in (\frac12, 1)$ is presented in  \cite{GalWarma2016}. We will also describe a few special considerations below in Section \ref{ss:vinit}.
	
	\subsection{Strong Solutions of the Fractional Heat Equation}
	Consider the following initial value problem:      
	\begin{equation}
		\label{e:heat}
		\begin{cases} 
			\partial_t v = -\Lambda^s v,
			\qquad (m,t)\in (0,1)\times (0,\infty) \\
			v(0) = v_0
		\end{cases} 
	\end{equation}
	We now record the definitions of weak and strong solutions of \eqref{e:heat}, and we recall from \cite{GalWarma2016} that strong solutions of \eqref{e:heat} exist and are unique for $L^\infty$ initial data.  (We document this statement as our Theorem~\ref{t:strongheat}.) Since our theory of \eqref{e:reformulation} requires $v\in L^\infty$, we are really only interested in strong solutions; however, we include the definition of a weak solution as well for the sake of completeness.  In what follows, we use the notation $W^{-s,2}(0,1) := (W^{s,2}(0,1))^*$ to refer to the topological dual space of $W^{s,2}(0,1)$, and $\langle\cdot,\cdot\rangle$ will denote the corresponding duality pairing.  
	
	\begin{defi}[Weak solution of the fractional heat equation]\label{def:weakheat}
		The function $v$ is a {\rm weak solution} of \eqref{e:heat} on the time interval $(0,T)$ and associated to the initial condition $v_0\in L^2(0,1)$ if for a.e. $t\in(0,T)$ the following properties hold.
		
		\begin{enumerate}[label = (\roman*)]
			\item The function $v$ and its time derivative $\partial_t v$ satisfy
			\begin{align*}
				v &\in L^\infty(0,T;L^2(0,1))\cap L^2(0,T; W^{s,2}(0,1)),\\
				\partial_t v &\in L^2(0,T; W^{-s,2}(0,1)).
			\end{align*}
			\item For all $\xi\in W^{s,2}(0,1)$ and for a.e. $t\in (0,T)$ we have
			\begin{equation}
				\label{e:fractionalheatweak}
				\langle\partial_t v(t),\xi\rangle + {\mathcal E}^s(v(t),\xi) = 0 
			\end{equation} 
			and $v(0)=v_0$ as elements of $L^2(0,1)$.
			\item Finally, the solution satisfies the energy equality
			\[
			\frac{1}{2}\|v(t)\|_{L^2(0,1)}^2 + \int_0^t{\mathcal E}^s(v(\tau),v(\tau))\dd \tau = \frac{1}{2}\|v_0\|_{L^2(0,1)}^2. 
			\]
		\end{enumerate}
	\end{defi}
	
	\begin{defi}[Strong solution of the fractional heat equation]\label{def:strongheat}
		Let $v_0\in L^\infty(0,1)$. A weak solution $v$ satisfying Definition \ref{def:weakheat} is {\rm strong} if
		\[
		v\in W^{1,\infty}_{loc}((0,T); L^2(0,1))\cap C((0,T);L^\infty(0,1)) 
		\]
		and $v(t)\in D(\Lambda^s)$ for a.e $t\in (0,T)$.
	\end{defi}
	
	We can now state the existence result from \cite{GalWarma2016}) that we mentioned above.
	
	\begin{theo}
		\label{t:strongheat}
		For any $v_0\in L^\infty(0,1)$ and any $T>0$, there exists a unique strong solution (in the sense of Definition \ref{def:strongheat}) of the fractional heat equation \eqref{e:heat} associated to the initial data $v_0$ on the time interval $[0,T)$.
	\end{theo}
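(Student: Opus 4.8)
The plan is to realize $\Lambda_{\mathsf K}^s$ as the nonnegative self-adjoint operator on $L^2(0,1)$ canonically associated with the closed, symmetric, densely defined form $(\mathcal E_{\mathsf K}^s, W_{\mathsf K}^{s,2}(0,1))$, and then to run the standard linear parabolic theory, with the nonlocal structure of $\mathcal E_{\mathsf K}^s$ supplying the $L^\infty$ bounds. First I would note that $W^{s,2}(0,1)$ embeds compactly into $L^2(0,1)$, so $\Lambda_{\mathsf K}^s$ has discrete spectrum and admits an $L^2$-orthonormal eigenbasis $\{e_k\}_{k\ge0}$ with $0\le\lambda_0\le\lambda_1\le\cdots\to\infty$ (with $\lambda_0=0$ and $e_0$ constant in the Neumann case). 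Since the form is symmetric and nonnegative, $-\Lambda_{\mathsf K}^s$ generates a bounded analytic $C_0$-semigroup on $L^2(0,1)$, given by $e^{-t\Lambda_{\mathsf K}^s}v_0=\sum_k e^{-\lambda_k t}(v_0,e_k)_{L^2(0,1)}e_k$.

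For $v_0\in L^\infty(0,1)\subset L^2(0,1)$ put $v(t):=e^{-t\Lambda_{\mathsf K}^s}v_0$. Lions' theorem (equivalently, Galerkin truncation in the eigenbasis) shows that $v$ is the unique element of $L^\infty(0,T;L^2(0,1))\cap L^2(0,T;W_{\mathsf K}^{s,2}(0,1))$ with $\partial_t v\in L^2(0,T;W_{\mathsf K}^{-s,2}(0,1))$ that solves \eqref{e:fractionalheatweak} with $v(0)=v_0$; pairing the equation with $v$ yields the energy equality in Definition~\ref{def:weakheat}(iii), so $v$ is the unique weak solution. Analyticity upgrades the regularity for positive times: $v(t)\in D(\Lambda_{\mathsf K}^s)$ for every $t\in(0,T)$ with $\|\Lambda_{\mathsf K}^sv(t)\|_{L^2(0,1)}\lesssim t^{-1}\|v_0\|_{L^2(0,1)}$, hence $\partial_t v=-\Lambda_{\mathsf K}^sv\in L^\infty_{\loc}((0,T);L^2(0,1))$. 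Uniqueness among strong solutions is then automatic, since any strong solution is in particular a weak one.

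It remains to obtain the $L^\infty$ bound and the continuity $v\in C([0,T);L^\infty(0,1))$. For the bound, the key structural fact is that $\mathcal E_{\mathsf K}^s$ is a Dirichlet form: the unit contraction operates, since for $v\in W_{\mathsf K}^{s,2}(0,1)$ the truncation $v^{\#}:=0\vee v\wedge1$ again belongs to $W_{\mathsf K}^{s,2}(0,1)$ (automatic for $s\le\tfrac12$ by Remark~\ref{rem:w=w0}, and by a short approximation argument for $s>\tfrac12$ in the Dirichlet case), and $|v^{\#}(m)-v^{\#}(m')|\le|v(m)-v(m')|$ forces $\mathcal E_{\mathsf K}^s(v^{\#},v^{\#})\le\mathcal E_{\mathsf K}^s(v,v)$. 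By the Beurling--Deny criteria---equivalently, by a Stampacchia truncation argument testing the equation against $(v-\|v_0\|_{L^\infty(0,1)})_+$---the semigroup is positivity preserving and $L^\infty$-contractive (indeed conservative in the Neumann case), so $\|v(t)\|_{L^\infty(0,1)}\le\|v_0\|_{L^\infty(0,1)}$ for all $t\ge0$; this is the quantitative nonlocal maximum principle. For $t>0$, ultracontractivity of the semigroup (from the Sobolev embeddings of $W^{s,2}(0,1)$) places $v(t)$ in $C(\overline{(0,1)})$ and makes $t\mapsto v(t)$ continuous into $L^\infty(0,1)$ on $(0,T)$.

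The step I expect to be the main obstacle is continuity into $L^\infty(0,1)$ as $t\to0^+$: the uniform $L^\infty$ bound together with the $L^2$- (hence, by interpolation, $L^p$-) convergence $v(t)\to v_0$ does not by itself give norm convergence in $L^\infty$, and for genuinely discontinuous $v_0$ (e.g.\ $v_0=\mathbf 1_{(0,1/2)}$) norm-continuity at $t=0$ cannot hold, so this requires care about the topology imposed at the initial time. We do not reprove this point here; it, along with the ultracontractivity estimates used above, is precisely what we extract from \cite{GalWarma2016}.
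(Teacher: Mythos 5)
The paper does not actually prove Theorem~\ref{t:strongheat}: the result is simply recalled from \cite{GalWarma2016}, with no argument supplied. Your outline is therefore a reconstruction of the machinery that sits behind that citation, and as such it is sound: realizing $\Lambda_{\mathsf K}^s$ as the self-adjoint operator of the Dirichlet form $(\mathcal E_{\mathsf K}^s, W_{\mathsf K}^{s,2})$, using compactness of $W^{s,2}(0,1)\hookrightarrow L^2(0,1)$ to get a discrete spectrum, generating an analytic $C_0$-semigroup, upgrading to $v(t)\in D(\Lambda_{\mathsf K}^s)$ for $t>0$, and invoking the Beurling--Deny/Stampacchia mechanism for $L^\infty$-contractivity are all the right ingredients. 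That is more than the paper provides, and it is consistent with what \cite{GalWarma2016} does.

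The one point worth dwelling on is the one you flag yourself. Definition~\ref{def:strongheat} requires $v\in C([0,T);L^\infty(0,1))$, closed at $t=0$. As you observe, the regional fractional heat semigroup is smoothing (for $t>0$, $v(t)$ is continuous on $(0,1)$), so for a discontinuous $v_0\in L^\infty$ such as $\mathbf 1_{(0,1/2)}$ one has $\|v(t)-v_0\|_{L^\infty}\ge\tfrac12$ for all $t>0$, and norm-continuity at $t=0$ fails. This is not a defect of your argument; it is an imprecision in the paper's transcription of the definition, which should almost certainly read $C((0,T);L^\infty(0,1))$ (or should impose $v(t)\to v_0$ only in $L^p$ for finite $p$, or weak-$*$ in $L^\infty$), matching what one actually gets and what the cited reference can deliver. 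Your decision to defer to \cite{GalWarma2016} on this point mirrors the paper's own treatment and is the right call; but you are correct that, read literally, the statement cannot be proved for general $L^\infty$ data, and it would be worth saying so explicitly rather than only hinting at it.

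Two minor remarks on the body of your sketch. First, the claim that $v^{\#}=0\vee v\wedge 1\in W_{\mathsf K}^{s,2}(0,1)$ needs no approximation argument in the Neumann case either: normal contractions preserve $W^{s,2}(0,1)$ directly from the double-integral definition of the seminorm, so the Dirichlet-form property holds for both $\mathsf K=\mathsf D$ and $\mathsf K=\mathsf N$ without separating cases by $s$. Second, for $s\le\tfrac14$ the embedding $W^{s,2}(0,1)\hookrightarrow L^p$ alone does not immediately give the $L^2\to L^\infty$ ultracontractivity bound in one step; one should either iterate the Nash/Moser scheme or appeal to the Sobolev-type inequality for the form, which is the standard route and what \cite{GalWarma2016} uses. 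Neither of these affects the overall correctness of your approach.
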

	
	\begin{rem}\rm\label{r:heatenergy}
		The spaces $W^{s,2}(0,1)\subset L^2(0,1)\subset W^{-s,2}(0,1)$ form a Gelfand triple and thus, testing \eqref{e:fractionalheatweak} by $\xi = v(t)$ for a.e. $t>0$, and applying the Lions-Magenes lemma, we obtain the differential energy equality
		\[
		\frac{1}{2}\frac{d}{dt}\|v(t)\|_{L^2(0,1)}^2=-{\mathcal E}^s(v(t),v(t)),\quad \text{for a.e. } t>0. 
		\]
		A similar argument leads to the conclusion that the mean $\bar{v}(t)=\int_0^1v(m,t)\dd m$ is a {\it constant-in-time} solution in the sense of Definition \ref{def:strongheat}.
	\end{rem}

	\subsection{Interpretation of $D(\Lambda^s)$}
	
	\label{ss:vinit}
	
	In Section \ref{s:LongTime} below, when we treat the protocol $\varphi(r) = C_s r^{-1-2s}$, we restrict attention to solutions $v$ generated from \eqref{e:heat} equipped with the \textit{Neumann} regional fractional Laplacian with $s>\frac12$.  We note, however, that our results could in principle also be extended to apply to \eqref{e:reformulation} equipped with the \textit{Dirichlet regional fractional Laplacian}.  (The Theorem we rely on from \cite{GalWarma2016}, on strong solutions of the fractional heat equation, is stated for both operators.)  We point out a few differences between these operators and explain why we have chosen to focus on the Neumann case.  For the rest of this section only, we denote the Dirichlet regional fractional Laplacian by  $\Lambda_{\mathsf{D}}$ and the Neumann regional fractional Laplacian by $\Lambda_{\mathsf{N}}$.  
	
	The operator $\Lambda^s_{\mathsf{D}}$ is defined similarly to $\Lambda^s_{\mathsf{N}}$, except that instead of using $W^s(0,1)$ for the domain of the associated bilinear form, one uses $W_0^s(0,1)$ (i.e., the closure of $C_c^\infty(0,1)$ in $W^s(0,1)$), and as a result (c.f. \cite{GalWarma2016}), we have $D(\Lambda^s_{\mathsf{D}})\subset W_0^s(0,1)$.  For $s\in (\frac12, 1)$, the space $W_0^s(0,1)$ enforces zero boundary conditions.  Consequently, replacing $\Lambda_{\mathsf{N}}$ with $\Lambda_{\mathsf{D}}$ in \eqref{e:heat} would enforce $v(0+) = v(1-) = 0$ for a strong solution $v$.  This has consequences for the associated solution $(\rho, u)$ of the Euler Alignment system.  If $\rho$ is compactly supported, then this would force $u$ to be zero at the extremes of $\supp \rho$, thus fixing the location of these extremes for all time: $\max \supp \rho(t) = \max \supp \rho_0$ and $\min \supp \rho(t) = \min \supp \rho_0$.  Confinement of the support of $\rho(t)$ is a phenomenon of interest (in fact, it is sometimes already associated with the terminology `flocking'), but from a modeling perspective, we would like to see this confinement arise as a result of the alignment forcing---not the boundary conditions.
	
	In contrast to $\Lambda_{\mathsf{D}}$, the operator $\Lambda_{\mathsf{N}}$ does not impose restrictions on the boundary values of $v$.  Gal and Warma's analysis does provide some insight into the sense in which a `fractional Neumann' boundary condition is enforced; however, it is much less rigid than the requirements imposed by $\Lambda^s_{\mathsf{D}}$.  Rather than getting into the technical details here, we give a brief example.  
	Suppose that $I(t):=\{x:\rho(x,t)>0\}$ is an interval at time zero, and thus for all time.  Assume that inside $I(t)$, we have that $\rho(t)$ is bounded below and continuous.  (This forces a jump in $\rho(t)$ at each of the endpoints of $I(t)$.) Then the fractional Neumann condition mentioned above are satisfied whenever $u(t)$ and $\partial_x u(t)$ are continuous and bounded inside $I(t)$, for instance.  Interpreting this condition for less regular solutions is more subtle, but using $\Lambda_{\mathsf{N}}$ remains a better choice for our purposes than using $\Lambda_{\mathsf{D}}$.

	\section{Stability and Long--Time Behavior}
	\label{s:LongTime}
	
	The goal of this section is to prove Theorem \ref{t:alignmentrough}; we study stability and long--time alignment of \eqref{e:EA} in two cases: for admissible and for $v$-strong mass-distributional solutions. Two key ingredients of our arguments are the energy identity
	\begin{equation}\label{e:energyid2}
		\frac{1}{2}\frac{d}{dt}\|v(t)\|^2_{L^2}=-{\mathcal E}_\varphi(v(t),v(t)),\quad {\mathcal E}_\varphi(v,v) = \int_{(0,1)\times(0,1)}\varphi(|m-m'|)|v(m)-v(m')|^2\dd m\dd m',
	\end{equation}
	and the conservation of the spatial mean
	$$ \int_0^1v(m,t)\dd m = const.$$
	Both of the above properties are satisfied for admissible and $v$-strong solutions by Theorem \ref{t:existencerough}.
	We begin by proving the following Poincar\' e-type inequality.
	\begin{lem}\label{l:poinc}
		If
		$$ c_\varphi:={\rm esssup}\left\{\int_0^1\frac{1}{\varphi(|m-m'|)}\dd m':\quad m\in (0,1) \right\}<+\infty,$$
		then for $\bar{v}:=\int_0^1v(m)\dd m$ we have
		$$ \int_0^1|v(m)-\bar{v}|^2\dd m\leq c_\varphi {\mathcal E}_\varphi(v,v). $$
	\end{lem}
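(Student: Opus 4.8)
The plan is to reduce the inequality to a single application of the Cauchy--Schwarz inequality in the variable $m'$, exploiting the fact that $\bar v$ is itself an average over $(0,1)$. The starting point is the elementary identity
\[
v(m) - \bar v = \int_0^1 \big(v(m) - v(m')\big)\dd m',
\]
valid for each fixed $m\in(0,1)$ since $\bar v = \int_0^1 v(m')\dd m'$.

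Next I would insert the factor $1 = \varphi(|m-m'|)^{1/2}\,\varphi(|m-m'|)^{-1/2}$ into the integrand and apply Cauchy--Schwarz in $m'$, obtaining
\[
|v(m)-\bar v|^2 \le \left(\int_0^1 \varphi(|m-m'|)\,|v(m)-v(m')|^2\dd m'\right)\left(\int_0^1 \frac{\dd m'}{\varphi(|m-m'|)}\right).
\]
The second factor is at most $c_\varphi$ for a.e. $m\in(0,1)$, directly by the definition of $c_\varphi$, so that
\[
|v(m)-\bar v|^2 \le c_\varphi \int_0^1 \varphi(|m-m'|)\,|v(m)-v(m')|^2\dd m' \qquad \text{for a.e. } m.
\]
Integrating this over $m\in(0,1)$ and using Tonelli's theorem then yields
\[
\int_0^1 |v(m)-\bar v|^2\dd m \le c_\varphi \int_{(0,1)\times(0,1)} \varphi(|m-m'|)\,|v(m)-v(m')|^2\dd m\dd m' = c_\varphi\,\mathcal{E}_\varphi(v,v),
\]
which is the desired estimate.

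There is essentially no serious obstacle here; the only points requiring a word of care are bookkeeping ones. If $\mathcal{E}_\varphi(v,v)=+\infty$ the inequality is trivial, so one may assume the right-hand side is finite. The hypothesis $c_\varphi<\infty$ forces $m'\mapsto 1/\varphi(|m-m'|)$ to lie in $L^1(0,1)$ for a.e.\ $m$, in particular $\varphi>0$ a.e., which legitimizes the factor $\varphi^{-1/2}$ in the Cauchy--Schwarz splitting; wherever $\varphi(|m-m'|)=+\infty$---a null set for the kernels of interest---one simply omits that set, as it contributes nothing to either integral. All integrands are nonnegative and jointly measurable, so Tonelli applies without issue. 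Finally, if $v$ is vector-valued, the identity and the Cauchy--Schwarz step are applied componentwise (equivalently, via the vector forms of the triangle and Cauchy--Schwarz inequalities), which changes nothing.
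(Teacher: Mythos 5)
Your proof is correct and follows exactly the same route as the paper's: the identity $v(m)-\bar v=\int_0^1(v(m)-v(m'))\dd m'$, the $\varphi^{1/2}\varphi^{-1/2}$ splitting, Cauchy--Schwarz in $m'$, then the essential supremum and integration in $m$. The extra bookkeeping remarks (Tonelli, the trivial infinite case, positivity of $\varphi$ a.e.) are sensible but add nothing beyond what the paper leaves implicit.
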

	\begin{proof}
		For a.e. $m\in (0,1)$ we have $v(m)-\bar{v} = \int_0^1v(m) - v(m')\dd m'$ and thus
		\begin{align*}
			|v(m) - \bar{v}|^2 &\leq \left|\int_0^1\sqrt{\varphi(|m-m'|)}(v(m)-v(m'))\ \frac{1}{\sqrt{\varphi(|m-m'|)}}\dd m'\right|^2\\
			&\leq \int_0^1\frac{1}{\varphi(|m-m'|)}\dd m'\int_0^1\varphi(|m-m'|)|v(m)-v(m')|^2\dd m'.
		\end{align*} 
		Taking the supremum in the former integral on the right-hand side above and integrating the latter with respect to $m$ yields the desired inequality.
	\end{proof}
	
	\begin{rem}\rm
		The constant $c_\varphi$ is finite in many natural cases that include both singular and regular protocols. For instance:
		
		\begin{enumerate}[label = \arabic*.]
			\item If $\varphi(r)\ge \varepsilon>0$ on $[0,1]$, then
			\[
			\int_0^1 \frac{1}{\varphi(|m-m'|)}\dd m' \le \varepsilon^{-1},
			\]
			hence $c_\varphi<\infty$.
			\item Condition $c_\varphi<\infty$ does not require $\varphi$ to be
			bounded away from zero, Indeed, if $\varphi(r)=(1-r)^\alpha$ with $\alpha\in(0,1)$, then
			$\varphi(1)=0$, yet
			\[
			\int_0^1 \frac{1}{(1-|m-m'|)^\alpha}\dd m'<\infty
			\]
			for every $m\in[0,1]$ (the only singularity is integrable for
			$\alpha<1$). Thus $c_\varphi<\infty$ even though $\varphi$ vanishes at $r=1$.
			\item If $\varphi=0$ on a set of positive measure, then
			$1/\varphi=\infty$ on that set and $c_\varphi=+\infty$; in particular,
			short-range kernels in the mass variable are excluded here.
			\item In the singular case of $\varphi(r)=C_s r^{-1-2s}$ with $s\in(0,1)$ we have
			\[
			c_\varphi\leq 2C_s \int_0^1|m'|^{1+2s}\dd m'<\infty.
			\]
		\end{enumerate}
	\end{rem}
	
	\begin{lem}\label{l:alignment}
		Let $v_1$ and $v_2$ be two weak or $v$-strong solutions of \eqref{e:reformulation}$_1$ with equal means. Then the following stability estimate holds true:
		\begin{equation}\label{e:l-alignment}
			\|v_1(t)-v_2(t)\|_{L^2}\leq e^{-\frac{t}{c_\varphi}}\|v_1(0)-v_2(0)\|_{L^2}
		\end{equation}
	\end{lem}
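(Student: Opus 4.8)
The plan is to exploit the linearity of \eqref{e:reformulation}$_1$ together with the Poincar\'e-type inequality of Lemma \ref{l:poinc}. Set $w := v_1 - v_2$. Since the right-hand side of \eqref{e:reformulation}$_1$ is a \emph{linear} operator of $v$ (the bounded operator $L$ when $\varphi$ is bounded, and the regional fractional Laplacian $\Lambda_{\mathsf{K}}^s$ when $\varphi(r) = C_s r^{-1-2s}$), and since the relevant solution classes are vector spaces, $w$ is again a weak (resp. $v$-strong) solution of \eqref{e:reformulation}$_1$ with initial datum $w(0) = v_1(0) - v_2(0)\in L^\infty(0,1)$. By the uniqueness asserted in Theorem \ref{t:existencerough}, $w$ coincides with the solution produced by that theorem for the datum $w(0)$; in particular, by part (c) of Theorem \ref{t:existencerough} (see also Remark \ref{r:heatenergy} for the singular case), $w$ preserves its spatial mean and satisfies the energy identity \eqref{e:energyid2}.

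Next I would observe that $w$ has vanishing mean at every time. Indeed, the hypothesis of equal means gives $\int_0^1 w(m,0)\dd m = \int_0^1 v_1(m,0)\dd m - \int_0^1 v_2(m,0)\dd m = 0$, and since the mean of $w$ is conserved, $\bar w(t):=\int_0^1 w(m,t)\dd m = 0$ for all $t$. With these two facts in hand the estimate follows by a Gr\"onwall argument: feeding $v = w(t)$ into Lemma \ref{l:poinc} and using $\bar w(t)=0$ gives $\|w(t)\|_{L^2}^2 \le c_\varphi\,{\mathcal E}_\varphi(w(t),w(t))$, and substituting this into \eqref{e:energyid2} written for $w$ yields
\[
\frac{1}{2}\frac{d}{dt}\|w(t)\|_{L^2}^2 = -\,{\mathcal E}_\varphi(w(t),w(t)) \le -\frac{1}{c_\varphi}\|w(t)\|_{L^2}^2 .
\]
Hence $\frac{d}{dt}\|w(t)\|_{L^2}^2 \le -\frac{2}{c_\varphi}\|w(t)\|_{L^2}^2$, and Gr\"onwall's inequality gives $\|w(t)\|_{L^2}^2 \le e^{-2t/c_\varphi}\|w(0)\|_{L^2}^2$; taking square roots produces \eqref{e:l-alignment}. (When $c_\varphi=+\infty$ the inequality is still valid, reducing to the statement that $t\mapsto\|w(t)\|_{L^2}$ is nonincreasing, which is immediate from \eqref{e:energyid2}.)

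The only genuinely delicate point is the justification, in the singular case, that the difference $w$ inherits the energy identity: one must check that $w$ is a bona fide weak solution of the fractional heat equation $\partial_t w = -\Lambda_{\mathsf{K}}^s w$ in the sense of Definition \ref{def:weakheat}---so that the Gelfand-triple/Lions--Magenes argument of Remark \ref{r:heatenergy} applies---and, separately, that the mean of $w$ is conserved. Both follow from linearity once one notes that $W^{s,2}_{\mathsf{K}}(0,1)$, $L^2(0,1)$ and $W^{-s,2}_{\mathsf{K}}(0,1)$ are vector spaces and that the class of weak solutions is closed under subtraction. In the bounded-kernel case, where $L$ is a bounded self-adjoint operator on $L^2(0,1)$ and solutions live in $C([0,T);L^2(0,1))$, the energy identity for $w$ can instead be obtained by pairing the equation directly with $w(t)$, so no additional work is required there.
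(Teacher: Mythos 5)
Your argument is correct and follows the same route as the paper's proof of Lemma \ref{l:alignment}: form $\omega=v_1-v_2$, use linearity plus Theorem \ref{t:existencerough} to obtain mean conservation and the energy identity for $\omega$, feed this into the Poincar\'e inequality of Lemma \ref{l:poinc}, and close with Gr\"onwall. Your extra remarks about verifying that the difference is a bona fide weak/strong solution in the singular case are a reasonable elaboration of the step the paper compresses into ``by linearity,'' but they do not change the method.
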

	\begin{proof}
		By linearity, if $v_1$ and $v_2$ are solutions to \eqref{e:reformulation}$_1$ (in respective classes of regularity), then so is $\omega:=v_1-v_2$. Moreover, by Theorem \ref{t:existencerough}, the mean of $\omega$ is equal to $0$ and $\omega$ satisfies the energy identity~\eqref{e:energyid2}. Thus, by Lemma \ref{l:poinc},
		$$ \frac{1}{2}\frac{d}{dt}\|\omega(t)\|_{L^2}^2 =  - {\mathcal E}_\varphi(\omega(t),\omega(t))\leq   - \frac{1}{c_\varphi}\|\omega(t)\|_{L^2}^2.$$
		Then \eqref{e:l-alignment} follows by Gr\"onwall's inequality.
	\end{proof}
	
	\begin{prop}\label{p:stabadmis}
		Suppose that $c_\varphi<+\infty$ and let $(\rho_1, u_1)$ and $(\rho_2,u_2)$ be a pair of admissible solutions of~\eqref{e:EApuretopo} with equal means. Then we have
		\begin{equation}\label{e:stabadmis1}
			\int_{\R^2}|u_1(x,t)-u_2(y,t)|^2\dd\pi_t(x,y)\leq e^{-\frac{2t}{c_\varphi}}\int_{\R^2}|u_1(x,0)-u_2(y,0)|^2\dd\pi_0(x,y) 
		\end{equation}
		where $\pi_t(x,y) := (M^{-1}_1(\cdot,t),M^{-1}_2(\cdot,t))_\sharp\lambda$. In particular taking
		$$u_2=\bar{u}_1=\int_\R u_1(x,0)\dd\rho_1(x,0)$$
		and, noting that the $x$-marginal of $\pi_t$ is $\rho(\cdot,t)$, we obtain the exponential alignment estimate
		$$ \int_\R|u_1(x,t)-\bar{u}_1|^2\dd\rho_1(x,t)\lesssim e^{-\frac{2t}{c_\varphi}}\xrightarrow{t\to+\infty} 0.$$
	\end{prop}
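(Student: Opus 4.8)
The plan is to reduce the assertion to the $L^2$-contraction for the $v$-equation established in Lemma \ref{l:alignment}, transporting it through the dictionary of Proposition \ref{p:equivalence}. First I would apply Proposition \ref{p:equivalence}(a) to each of $(\rho_1, u_1)$ and $(\rho_2, u_2)$ to obtain admissible weak solutions $(v_i, M_i)$ of \eqref{e:reformulation}, where $M_i(\cdot,t)$ is the cumulative distribution function of $\rho_i(\cdot,t)$, $M_i^{-1}(\cdot,t)$ is its left-continuous generalized inverse, and $v_i(m,t) = u_i(M_i^{-1}(m,t),t)$. Using the pushforward identity $(M_i^{-1}(\cdot,0))_\sharp \lambda = \rho_i(\cdot,0)$ from Proposition \ref{p:CDFprops}, the hypothesis that $(\rho_1, u_1)$ and $(\rho_2, u_2)$ have equal means becomes $\int_0^1 v_1(m,0)\dd m = \int_0^1 v_2(m,0)\dd m$, so $v_1$ and $v_2$ are weak (or $v$-strong) solutions of \eqref{e:reformulation}$_1$ with equal means, and Lemma \ref{l:alignment} yields
\[
\|v_1(t)-v_2(t)\|_{L^2(0,1)} \le e^{-t/c_\varphi}\,\|v_1(0)-v_2(0)\|_{L^2(0,1)}, \qquad t\ge 0.
\]

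Next I would unwind the definition $\pi_t = (M_1^{-1}(\cdot,t), M_2^{-1}(\cdot,t))_\sharp \lambda$. By the change-of-variables formula for pushforward measures, for each fixed $t$,
\[
\int_{\R^2}|u_1(x,t)-u_2(y,t)|^2\dd\pi_t(x,y) = \int_0^1\big|u_1(M_1^{-1}(m,t),t) - u_2(M_2^{-1}(m,t),t)\big|^2\dd m = \|v_1(t)-v_2(t)\|_{L^2(0,1)}^2,
\]
and the same identity holds at $t=0$. Squaring the contraction estimate and substituting these two identities gives \eqref{e:stabadmis1} at once.

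For the \textquotedblleft in particular\textquotedblright\ statement, I would take $(\rho_2, u_2)$ to be the traveling wave $\rho_2(x,t) = \rho_1(x-\bar{u}_1 t,0)$, $u_2 \equiv \bar{u}_1$. Since $u_2$ is constant, $v_2 \equiv \bar{u}_1$ is a (stationary) weak solution of \eqref{e:reformulation}$_1$ in the sense of Definition \ref{def:vweak} with mean $\bar{u}_1 = \int_0^1 v_1(m,0)\dd m$, so the means agree; moreover $M_2$ is a rigid translate of $M_0$, hence absolutely continuous and the entropy solution of its (linear) conservation law, so $(v_2, M_2)$ is admissible and, by Proposition \ref{p:equivalence}(b), so is $(\rho_2, u_2)$. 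Because the first marginal of $\pi_t$ equals $(M_1^{-1}(\cdot,t))_\sharp\lambda = \rho_1(\cdot,t)$ (Proposition \ref{p:CDFprops}), the left side of \eqref{e:stabadmis1} for this choice equals $\int_\R |u_1(x,t)-\bar{u}_1|^2\dd\rho_1(x,t)$, while its right side is $e^{-2t/c_\varphi}\int_\R |u_1(x,0)-\bar{u}_1|^2\dd\rho_1(x,0)$, a finite constant since $u_0\in L^\infty(\R,\rho_0)$; this gives the claimed exponential decay.

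The argument is essentially bookkeeping once Proposition \ref{p:equivalence} and Lemma \ref{l:alignment} are available; the only points needing a little care are (1) verifying that the constant-velocity traveling wave genuinely qualifies as an admissible solution of \eqref{e:EApuretopo} in the sense of Definition \ref{def:admist}, so that Lemma \ref{l:alignment} applies to the pair $(v_1, v_2)$, and (2) the manipulations with generalized inverses and pushforwards, in particular that $v_i(m,t) = u_i(M_i^{-1}(m,t),t)$ holds for a.e.\ $m$ and that $(M_i^{-1}(\cdot,t))_\sharp\lambda = \rho_i(\cdot,t)$, both of which are provided by Proposition \ref{p:CDFprops}. I do not anticipate a genuine obstacle here.
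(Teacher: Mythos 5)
Your proposal is correct and follows essentially the same route as the paper: pass to mass coordinates via Proposition \ref{p:equivalence}, invoke the $L^2$-contraction of Lemma \ref{l:alignment}, and rewrite the $\pi_t$-integral as an $L^2(0,1)$ norm by the pushforward change of variables. One minor (and arguably cleaner) difference: for the alignment estimate, the paper sets $u_2\equiv\bar u_1$ and writes $\rho_2:=\rho_1$, whereas you take the genuine traveling wave $\rho_2(x,t)=\rho_1(x-\bar u_1 t,0)$, which is what actually solves the continuity equation with constant velocity; since only $v_2\equiv\bar u_1$ enters the estimate, both choices yield the identical conclusion, but yours is the correct admissible comparison pair in the sense of Definition \ref{def:admist}.
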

	
	\begin{proof}
		By Proposition \ref{p:equivalence}, there exist admissible solutions $(v_1, M_1)$ and $(v_2, M_2)$ corresponding to $(\rho_1, u_1)$ and $(\rho_2, u_2)$, respectively, and which satisfy
		$$ u_1(M^{-1}_1(m,t),t)-u_2(M^{-1}_2(m,t),t) = v_1(m,t)-v_2(m,t).$$
		where $v_1-v_2$ is a weak solution to \eqref{e:reformulation} and satisfies the energy identity \eqref{e:energyid2}. By the 
		definition of $\pi_t$ and the pushforward change of variables formula we have, for every $t\ge0$,
		\begin{align*}
			\int_{\R^2} |u_1(x,t)-u_2(y,t)|^2\dd\pi_t(x,y)
			&= \int_0^1 
			\bigl|u_1(M^{-1}_1(m,t),t)-u_2(M^{-1}_2(m,t),t)\bigr|^2\dd m \\
			&= \int_0^1 |v_1(m,t)-v_2(m,t)|^2\dd m.
		\end{align*}
		The same identity holds at $t=0$. Therefore \eqref{e:stabadmis1} is
		exactly \eqref{e:l-alignment} rewritten in Eulerian variables.
		
		For the alignment estimate we choose $(\rho_2,u_2)$ to be the stationary
		state with constant velocity equal to the initial total momentum of
		$(\rho_1,u_1)$, namely
		\[
		\bar{u}_1 := \int_\R u_1(x,0)\dd \rho_1(x,0), \qquad
		u_2(x,t):= \bar{u}_1,\quad \rho_2 :=\rho_1.
		\]
		In mass coordinates,  this corresponds to
		\[
		v_2(m,t)\equiv \bar{u}_1,
		\qquad
		v_1(m,0) = u_1(M_1^{-1}(m,0),0),
		\]
		and by construction
		\[
		\int_0^1 v_1(m,0)\dd m
		= \int_0^1 u_1(M^{-1}_1(m,0),0)\dd m
		= \int_\R u_1(x,0)\dd\rho_1(x,0)
		= \bar{u}_1
		= \int_0^1 v_2(m,0)\dd m,
		\]
		which is then conserved for $t>0$, by  Theorem \ref{t:existencerough}.
		Plugging $u_2=\bar{u}_1$ into \eqref{e:stabadmis1} and observing that $(M^{-1}_1(\cdot,t))_\sharp\lambda=\rho_1(\cdot,t)$ yields the exponential alignment estimate.
	\end{proof}
	
	\begin{prop}\label{p:stabstrong}
		Let $(\rho_1, u_1)$ and $(\rho_2,u_2)$ be a pair of mass-distributional solutions of \eqref{e:EApuretopo} with equal means associated with $v$-strong solutions $(M_1,v_1)$ and $(M_2,v_2)$ of \eqref{e:reformulation}$_1$ under the protocol $\varphi(r) = C_sr^{-1-2s}$ with $s>\frac{1}{2}$, and the right side of \eqref{e:reformulation}$_1$ interpreted as $-\Lambda^s v$. Here $v_1,v_2$ are continuous representatives of their respective classes in $W^{s,2}(0,1)$. Then there exists a constant $\lambda_1>0$ such that for all $t\geq \tau>0$ we have
		$$ \sup_{x\in\R}|u_1(x,t)-u_2(x,t)|\lesssim e^{-\lambda_1 (t-\tau)}\|v_1(\tau)-v_2(\tau)\|_{W^{s,2}} $$
		In particular,
		$$\sup_{x\in\R}|u_1(x,t)-\bar{u}_1|\lesssim e^{-\lambda_1 t}\xrightarrow{t\to+\infty} 0, $$
		where
		$$ \bar{u}_1:=\int_\R u_1(x,0)\dd\rho_1(x,0). $$
		Moreover, if additionally $v_1(0),v_2(0)\in W^{s,2}(0,1)$ then we can take $\tau=0$ above.
	\end{prop}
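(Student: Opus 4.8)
The plan is to carry out the whole analysis in mass coordinates and only at the end translate back to the Eulerian velocities. Concretely, I would reduce the comparison of $(\rho_1,u_1)$ and $(\rho_2,u_2)$ to exponential decay of the difference $\omega := v_1 - v_2$ in the fractional Sobolev norm $W^{s,2}(0,1)$, and then pass to $u_i = v_i\circ M_i$ using the embedding $W^{s,2}(0,1)\hookrightarrow C([0,1])$, which is available precisely because $s>\tfrac12$. By linearity of the regional fractional heat equation and the mean-preservation property recorded in Theorem~\ref{t:existencerough}(c) and Remark~\ref{r:heatenergy}, $\omega$ is itself a $v$-strong solution of $\partial_t\omega = -\Lambda_{\mathsf N}^s\omega$ with $\int_0^1 \omega(m,t)\dd m = 0$ for all $t$; the `equal means' hypothesis is exactly what makes this mean vanish at $t=0$ (hence for all $t$). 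For the `in particular' alignment assertion I would instead take $\omega := v_1 - \bar u_1$, which is also mean-zero and solves the same equation, since $\bar u_1$ coincides with the conserved mean $\int_0^1 v_1(m,t)\dd m$.

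The heart of the argument is then the estimate $\|\omega(t)\|_{W^{s,2}}\le e^{-\lambda_1(t-\tau)}\|\omega(\tau)\|_{W^{s,2}}$ for $t\ge\tau$, with $\lambda_1 = \tfrac{1}{2c_\varphi}$ (or, optimally, the spectral gap of $\Lambda_{\mathsf N}^s$ on mean-zero functions, which is positive since $c_\varphi<\infty$ here by item~4 of the Remark following Lemma~\ref{l:poinc}). For the $L^2$ component this is Lemma~\ref{l:alignment}: the energy identity combined with Lemma~\ref{l:poinc} gives $\tfrac{d}{dt}\|\omega\|_{L^2}^2 \lesssim -\tfrac1{c_\varphi}\|\omega\|_{L^2}^2$. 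For the fractional seminorm I would differentiate the Dirichlet form along the flow; using $\partial_t\omega = -\Lambda_{\mathsf N}^s\omega$ and selfadjointness,
\[
\frac{d}{dt}\,{\mathcal E}_{\mathsf N}^s(\omega(t),\omega(t)) = 2\,{\mathcal E}_{\mathsf N}^s(\partial_t\omega(t),\omega(t)) = -2\,\|\Lambda_{\mathsf N}^s\omega(t)\|_{L^2}^2 ,
\]
valid for a.e.\ $t>0$ because $\omega(t)\in D(\Lambda_{\mathsf N}^s)$ for strong solutions. Since ${\mathcal E}_{\mathsf N}^s(\omega,\omega) = (\Lambda_{\mathsf N}^s\omega,\omega)_{L^2}\le \|\Lambda_{\mathsf N}^s\omega\|_{L^2}\|\omega\|_{L^2}$ and, by Lemma~\ref{l:poinc} applied to the mean-zero $\omega$, $\|\omega\|_{L^2}\lesssim \sqrt{c_\varphi}\,\sqrt{{\mathcal E}_{\mathsf N}^s(\omega,\omega)}$, one obtains ${\mathcal E}_{\mathsf N}^s(\omega,\omega)\lesssim c_\varphi\,\|\Lambda_{\mathsf N}^s\omega\|_{L^2}^2$, hence $\tfrac{d}{dt}{\mathcal E}_{\mathsf N}^s(\omega,\omega)\lesssim -\tfrac1{c_\varphi}{\mathcal E}_{\mathsf N}^s(\omega,\omega)$. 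Adding the two Grönwall conclusions and recalling $\|\cdot\|_{W^{s,2}}^2 = \|\cdot\|_{L^2}^2 + {\mathcal E}_{\mathsf N}^s(\cdot,\cdot)$ gives the claimed bound. If $v_1(0),v_2(0)\in W^{s,2}(0,1)$ this runs from $\tau=0$; otherwise I would take $\tau>0$ and invoke the instantaneous smoothing of the regional fractional heat semigroup, which maps $L^\infty(0,1)\subset L^2(0,1)$ into $D(\Lambda_{\mathsf N}^s)\subset W^{s,2}(0,1)$ for every positive time, so that $\omega(\tau)\in W^{s,2}$ and the preceding estimate makes sense.

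Next, the Sobolev embedding ($s>\tfrac12$) gives $\|\omega(t)\|_{L^\infty(0,1)}\lesssim \|\omega(t)\|_{W^{s,2}(0,1)}$, and since $x\mapsto M_i(x,t)$ takes values in $[0,1]$, for any constant $c$ one has $\sup_{x\in\R}|v_i(M_i(x,t),t) - c|\le \|v_i(\cdot,t)-c\|_{L^\infty(0,1)}$. Taking $c = \bar u_1$ and $\omega = v_1 - \bar u_1$ yields $\sup_x|u_1(x,t)-\bar u_1|\le \|v_1(\cdot,t)-\bar u_1\|_{L^\infty(0,1)}\lesssim e^{-\lambda_1(t-\tau)}\|v_1(\tau)-\bar u_1\|_{W^{s,2}}$, and hence $\lesssim e^{-\lambda_1 t}\to 0$; the same applied to $u_2$ shows $\sup_x|u_1(x,t)-u_2(x,t)|\to 0$ at rate $\lambda_1$. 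When the two solutions share a density, $M_1\equiv M_2$ and $u_1 - u_2 = (v_1-v_2)\circ M_1$, so the two-solution bound $\sup_x|u_1(x,t)-u_2(x,t)|\lesssim e^{-\lambda_1(t-\tau)}\|v_1(\tau)-v_2(\tau)\|_{W^{s,2}}$ follows at once from the $C([0,1])$ decay of $\omega=v_1-v_2$; in particular $v_1(0)=v_2(0)$ forces $\omega\equiv 0$, which gives the uniqueness statement (c).

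The main obstacle I anticipate is the exponential decay of the fractional seminorm ${\mathcal E}_{\mathsf N}^s(\omega(t),\omega(t))$ rather than just the $L^2$ norm: unlike the $L^2$ decay, which comes directly from the energy identity and Lemma~\ref{l:poinc}, this requires differentiating the Dirichlet form along a strong solution and rigorously justifying $\tfrac{d}{dt}{\mathcal E}_{\mathsf N}^s(\omega,\omega) = -2\|\Lambda_{\mathsf N}^s\omega\|_{L^2}^2$ from the regularity in Definition~\ref{def:strongheat} ($\omega\in W^{1,\infty}_{loc}((0,T);L^2)$ with $\omega(t)\in D(\Lambda_{\mathsf N}^s)$ a.e.), followed by a second Poincaré-type estimate to absorb the right-hand side. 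A secondary technical point is upgrading `$\omega(\tau)\in W^{s,2}$ for a.e.\ $\tau$' to `for every $\tau>0$' via the smoothing property of the semigroup when the data is merely $L^\infty$, so that the estimate holds for all $t\ge\tau>0$ as stated; and a minor bookkeeping point is the passage through the composition $v_i\circ M_i$, which is transparent only when the two densities coincide.
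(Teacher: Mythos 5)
Your overall architecture matches the paper's: reduce to $\omega=v_1-v_2$, exploit linearity and mean-preservation so that $\omega$ is a mean-zero $v$-strong solution, prove exponential decay of $\omega$ in a Sobolev norm, and then pass to $\sup$-norm via the embedding $W^{s,2}(0,1)\hookrightarrow C^{0,\alpha}$ for $s>\tfrac12$. What you do differently is the core decay estimate for the Dirichlet form. The paper diagonalizes: it notes that $S=(\Lambda^s_{\mathsf N})^{-1}$ is compact and self-adjoint on the mean-zero subspace, expands $\omega(t)=\sum_i e^{-\lambda_i t}\omega_i e_i$ in the resulting eigenbasis, and reads off ${\mathcal E}_\varphi(\omega(t),\omega(t))=\sum_i\lambda_i e^{-2\lambda_i t}|\omega_i|^2\le e^{-2\lambda_1(t-\tau)}{\mathcal E}_\varphi(\omega(\tau),\omega(\tau))$ directly from the expansion; the decay of the Dirichlet form is then algebraically immediate and the constant $\lambda_1$ is identified as the bottom of the spectrum via the Rayleigh quotient. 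You instead run a differential inequality: compute $\frac{d}{dt}{\mathcal E}_{\mathsf N}^s(\omega,\omega)=-2\|\Lambda_{\mathsf N}^s\omega\|_{L^2}^2$, then absorb the right side using the Cauchy–Schwarz plus Poincar\'e chain ${\mathcal E}_{\mathsf N}^s(\omega,\omega)\le\|\Lambda_{\mathsf N}^s\omega\|_{L^2}\|\omega\|_{L^2}\le\sqrt{c_\varphi}\,\|\Lambda_{\mathsf N}^s\omega\|_{L^2}\sqrt{{\mathcal E}_{\mathsf N}^s(\omega,\omega)}$, and close with Gr\"onwall. Both routes are valid and yield the same exponential rate (your $1/c_\varphi$ is a lower bound for the spectral gap $\lambda_1$; you also note the optimal constant is available). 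The trade-off is exactly the one you flag: your approach is more elementary in that it avoids invoking the spectral theorem for compact operators, but it requires justifying the time-differentiation of the Dirichlet form along a strong solution, which the paper's spectral expansion sidesteps entirely. This is doable — $\Lambda_{\mathsf N}^s$ generates an analytic semigroup, so $\omega(t)\in D(\Lambda_{\mathsf N}^{ks})$ for all $k$ and all $t>0$, which also supplies the instantaneous smoothing you invoke for $\tau>0$ — but it is an extra technical step the paper's route avoids.

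Two small points. First, your slip in the rate ($\lambda_1=\tfrac{1}{2c_\varphi}$ should be $\tfrac1{c_\varphi}$ at the level of the unsquared norm) is cosmetic. Second, your observation that the two-solution Eulerian $\sup$-norm bound $\sup_x|u_1(x,t)-u_2(x,t)|$ is transparent only when $M_1\equiv M_2$ is well taken: $u_1(x,t)-u_2(x,t)=v_1(M_1(x,t),t)-v_2(M_2(x,t),t)$ involves compositions with different mass maps in general. The paper's closing sentence (``all values of $u(\cdot,t)$ are inherited from values of $v(\cdot,t)$'') handles the constant-$v_2$ alignment statement cleanly (there the composition is trivial), but for generic $M_1\ne M_2$ this passage deserves the remark you make; your identification of this as a bookkeeping subtlety, rather than glossing over it, is a genuine observation and not a defect in your argument.
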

	
	
	\begin{proof}
		The proof follows by a standard asymptotic estimate
		$$ \|v(t)\|_{C^{0,\alpha}}\xrightarrow{t\to+\infty} 0,
		$$
		for the fractional heat equation coupled with a direct change of variables performed {\it for all} $m\in(0,1)$. We sketch the proof for the reader's convenience, referring to \cite{BBC-2003} for details. 
		By linearity, $\omega:=v_1-v_2$ is a solution of \eqref{e:reformulation}$_1$ in the sense of Definition \ref{def:strongheat}.
		The inverse of the regional fractional Laplacian $S=(\Lambda^s)^{-1}$ in  $\{v\in L^2:\int_0^1v=0\}$ is a compact and self-adjoint operator on $L^2(0,1)$, which ensures the existence of an orthonormal basis $\{e_i\}_{i=1}^\infty$ of $L^2(0,1)$ consisting of eigenvectors of $S$ (with respective eigenvalues $1/\lambda_i$ associated with $e_i$). One can also show that the eigenvalue associated with constant functions is $\lambda_0=0$, while all others are positive, with the smallest determined by the Rayleigh quotient 
		$$0<\lambda_1
		:= \inf\left\{
		\frac{{\mathcal E}_\varphi(v,v)}{\|v\|_{L^2(0,1)}^2}
		\;:\;
		v\in W^{s,2}(0,1)\setminus\{0\},\ \int_0^1 v = 0
		\right\}.$$  In this basis,
		$$ \omega(t) = \sum_{i=1}^\infty e^{-\lambda_i t}\omega_i e_i,\quad t>0, $$
		and for $\tau\in(0,t)$ we have
		\begin{align*} 
			{\mathcal E}_\varphi(\omega(t),\omega(t)) & = (\Lambda^s \omega(t),\omega(t)) = \sum_{i=1}^{+\infty}\lambda_ie^{-2\lambda_i t}|\omega_i|^2 \\
			& \leq e^{-2\lambda_1 (t-\tau)}\sum_{i=1}^\infty\lambda_ie^{-2\lambda_i \tau}|\omega_i|^2 = e^{-2\lambda_1 (t-\tau)}{\mathcal E}_\varphi(\omega(\tau),\omega(\tau)).
		\end{align*} 
		Since $\omega(\tau)\in W^{s,2}(0,1)$ for a.e. $\tau\in(0,t)$, we conclude that
		\begin{equation}\label{e:maxstab.1}
			{\mathcal E}_\varphi(\omega(t),\omega(t))\leq e^{-2\lambda_1 (t-\tau)}{\mathcal E}_\varphi(\omega(\tau),\omega(\tau))\leq e^{-\lambda_1 (t-\tau)}\|\omega(\tau)\|_{W^{s,2}}^2.        
		\end{equation}
		Finally, a Campanato-Morrey-type inequality (cf. Theorem 8.2 from \cite{SobolevHitchhiker}) ensures that $\omega$, being continuous by definition, satisfies the H\"older estimate
		$$ \|\omega\|_{C^{0,\alpha}}^2\leq c_{m}{\mathcal E}_\varphi(\omega,\omega), \quad \alpha=s-\frac{1}{2}$$
		which together with \eqref{e:maxstab.1} yields in particular
		$$ \sup_{m\in(0,1)}|v_1(m,t)-v_2(m,t)|\lesssim e^{-\lambda_1 t}\|v_1(\tau)-v_2(\tau)\|_{W^{s,2}}. $$
		In particular, since, by Remark \ref{r:heatenergy}, the mean
		$$ \bar{v}_1 = \int_0^1v_1(m,t)\dd m =\int_0^1 v_1(m,0)\dd m = \int_\R u_1(x,0)\dd\rho_1(x,0) = \bar{u}_1$$
		is a constant-in-time solution to \eqref{e:reformulation}$_1$, we can take $v_2 = \bar{v}_1$ to arrive at
		$$ \sup_{m\in(0,1)}|v_1(m,t)-\bar{v}_1|\lesssim e^{-\lambda_1 t} \xrightarrow{t\to\infty} 0. $$
		Now since all values of $u(\cdot,t)$ are inherited from values of $v(\cdot, t)$ we obtain the desired stability and alignment estimates.
		
		\noindent
		Finally we note that for $\omega(0)\in W^{s,2}(0,1)$, throughout the proof we may take $\tau=0$ since then
		$$ {\mathcal E}_\varphi(\omega(0),\omega(0)) = \sum_{i=1}^\infty\lambda_i|\omega_i|^2 \leq \|\omega(0)\|^2_{W^{s,2}}<+\infty. $$

	\end{proof}
	
	\section{Conclusions and Outlook}
	
	The main goal of this paper was to investigate the relationship between purely topological alignment dynamics and its reformulation in mass coordinates, which leads to a decoupling of the velocity equation. We focused on well-posedness and long-time behavior of solutions corresponding to both bounded and singular communication kernels. In the latter case, the velocity equation reduces to a regional fractional heat equation.
	
	Part of the appeal of purely topological alignment dynamics lies in the simplicity of the arguments used to establish stability and alignment estimates, as demonstrated in Section~\ref{s:LongTime}. This naturally raises the question of long-time behavior in the case of short-range purely topological communication protocols, where the support of the interaction kernel $\varphi$ is contained in some interval $[0,\rho]\subset[0,1)$. In such a setting, the condition $c_\varphi<+\infty$, required for the Poincar\'e inequality in Lemma~\ref{l:poinc}, is no longer satisfied. 
	
	Nevertheless, alternative approaches based on chain connectivity arguments (see, for instance, \cite{MPT2019, ShvydkoySurvey} and related works of Tadmor \cite{T-21, T-23}) have been successfully employed to propagate interactions from short to long ranges through regions of positive density. Interestingly, in the purely topological setting, interactions appear to propagate even through regions of very low density, $\rho\ll\varepsilon$, suggesting a more accessible mechanism that merits further investigation.
	
	Another natural direction for future research concerns the mean-field limit. On the one hand, the velocity equation is fully decoupled from the rest of the system and, consequently, its evolution is insensitive to whether it transports an atomic measure or a continuum limit of empirical measures. Moreover, the scalar conservation law governing the cumulative distribution function $M$ admits approximations by piecewise constant functions $M^N$ (for instance, via a wave-front tracking argument), which are naturally associated with empirical measures $\rho^N=\partial_x M^N$. On the other hand, even with this structural information, it remains highly non-trivial to determine whether the corresponding particle system describing the evolution of the atoms of $\rho^N$ satisfies a closed system of ODEs of the form~\eqref{e:CS}.
	
	Finally, following Remark \ref{r:nowynew}, it is a valid question if local existence of classical (or at the very least, amissible) solutions with singular protocol $\varphi(r)=C_s s^{-1-2r}$ can be obtained. Specifically, one may aim for a threshold-type result akin to Theorem \ref{t:classical}.
	
	These considerations lead to the following questions:
	\begin{enumerate}
		\item[Q1] Is it possible to employ chain connectivity arguments to propagate interactions induced by short-range communication protocols without imposing assumptions on the magnitude of the agents' density?
		\item[Q2] Can mass-distributional solutions be rigorously constructed as mean-field limits of solutions to ODE systems of the form~\eqref{e:CS}?
		\item[Q3] Is there a variant of Theorem \ref{t:classical} for singular communication $\varphi(r)=C_s s^{-1-2r}$, providing sufficient condition for local existence of classical solutions?
	\end{enumerate}
	
	We believe that addressing these questions will contribute to a deeper understanding of purely topological alignment dynamics and help bridge the gap between particle-based models and their continuum descriptions.

	\section*{Acknowledgment}
	TL is supported in part by NSF grant DMS-2408585. JP's work is supported by the Polish National Science Centre’s Grant No. 2025/58/E/ST1/00482 (Sonata Bis).
	
	\section{Appendix}
	
	\begin{prop}
		\label{p:CDFprops}
		Let $\rho$ be a probability measure on $\mathbb{R}$. Let $M$ denote its right-continuous cumulative distribution function, and let $M^{-1}$ denote the left-continuous generalized inverse of $M$.  Let $\lambda$ denote Lebesgue measure on $(0,1]$.  Then the following statements hold.
		\begin{enumerate}[label = (\alph*)]
			\item $(M^{-1})_{\sharp} \lambda = \rho$.
			\item $M^{-1}\circ M(x) = x$ for $\rho$-a.e. $x\in \mathbb{R}$.  (In particular, $M^{-1}(M(x))\le x$ for all $x$, and strict inequality holds if and only if $\rho((w,x]) = 0$ for some $w<x$, that is, if $x$ has a vacuum interval directly to its left and is not an atom for $\rho$.) 
			\item $M_{\sharp} \rho = \lambda$ if and only if $M$ is continuous, if and only if $M\circ M^{-1}$ is the identity map on $(0,1)$.
			\item Suppose $f_1$ and $f_2$ are bounded, measurable functions.  If $f_1 \circ M^{-1} = f_2 \circ M^{-1}$ as elements of $L^1([0,1], \lambda)$, then $f_1 = f_2$ as elements of $L^1(\mathbb{R}, \rho)$.  If $M$ is continuous, then the converse implication holds as well.  
			\item If $f$ is bounded and measurable, then $\int_0^{M(x)} f\circ M^{-1} \dm = \int_{(-\infty, x]} f\rho \dx$ for all $x\in \mathbb{R}$.
		\end{enumerate}
	\end{prop}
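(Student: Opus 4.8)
The plan is to reduce the entire proposition to a single fundamental equivalence between $M$ and its generalized inverse:
\[
M^{-1}(m) \le x \iff m \le M(x), \qquad m \in (0,1],\ x \in \mathbb{R}.
\]
The implication $(\Leftarrow)$ is immediate from the definition $M^{-1}(m) = \inf\{y : M(y) \ge m\}$, while $(\Rightarrow)$ follows once one checks that the up-set $\{y : M(y) \ge m\}$ contains its own infimum, which is precisely where the right-continuity of $M$ is used. As byproducts I record $M(M^{-1}(m)) \ge m$ for $m \in (0,1]$ and $M^{-1}(M(x)) \le x$ for every $x$. I would also record the standard dictionary between continuity and atoms: $M$ is continuous at $x_0$ if and only if $\rho$ has no atom at $x_0$, with $\rho(\{x_0\}) = M(x_0) - M(x_0^-)$ measuring the jump.

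Granting the equivalence, part (a) is a one-line verification that $((M^{-1})_\sharp \lambda)((-\infty,x]) = \lambda(\{m \in (0,1] : M^{-1}(m)\le x\}) = \lambda((0,M(x)]) = M(x) = \rho((-\infty,x])$, and two Borel measures that agree on all rays $(-\infty,x]$ coincide. Part (e) is of the same flavour: by (a) and the pushforward change-of-variables formula,
\[
\int_{(-\infty,x]} f\dd\rho = \int_{\{m\in(0,1]\,:\,M^{-1}(m)\le x\}} f(M^{-1}(m))\dd m = \int_0^{M(x)} f(M^{-1}(m))\dd m,
\]
the last equality being the fundamental equivalence once more. The forward implication of (d) is likewise immediate from (a): putting $g = f_1 - f_2$, one has $\int_{\mathbb{R}} |g|\dd\rho = \int_{(0,1]} |g\circ M^{-1}|\dd\lambda = 0$.

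For part (b), $M^{-1}(M(x)) \le x$ holds for every $x$ straight from the definition, and one checks that strict inequality at $x$ is equivalent to the existence of $w < x$ with $\rho((w,x]) = 0$. I would then show that the exceptional set $E = \{x : M^{-1}(M(x)) < x\}$ is $\rho$-null by covering it as $E \subseteq \bigcup_{q \in \mathbb{Q}} E_q$, where $E_q := \{x > q : \rho((q,x]) = 0\}$; each $E_q$ is an interval on whose closure $\rho$ vanishes (using $\rho((q, \sup E_q)) = \lim_{y \uparrow \sup E_q} \rho((q,y]) = 0$ and that $\sup E_q$ cannot be an atom), hence $\rho(E_q) = 0$. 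For part (c), I would prove separately that each of the three conditions is equivalent to continuity of $M$: if $M$ is continuous then $\{x : M(x) \le a\} = (-\infty, x_a]$ with $M(x_a) = a$ by the intermediate value theorem, whence $(M_\sharp\rho)([0,a]) = M(x_a) = a$, so $M_\sharp\rho = \lambda$; a jump of $M$ at $x_0$ instead forces $(M_\sharp\rho)([0,a]) = M(x_0^-) < a$ for $a \in (M(x_0^-), M(x_0))$; and $M(M^{-1}(m)) = m$ for all $m \in (0,1)$ fails precisely at the values $m$ straddled by a jump of $M$. The converse half of (d) then follows by running the change of variables backwards: by (c) we have $M_\sharp\rho = \lambda$, and by (b) we have $M^{-1}\circ M = \mathrm{id}$ $\rho$-a.e., so $\int_{(0,1]} |g\circ M^{-1}|\dd\lambda = \int_{\mathbb{R}} |g\circ M^{-1}\circ M|\dd\rho = \int_{\mathbb{R}} |g|\dd\rho = 0$.

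The main obstacle I anticipate is part (b): verifying carefully that the ``vacuum directly to the left'' exceptional set is genuinely $\rho$-null, in particular that the passage to rational left endpoints $q$ preserves the property $\rho((q,x]) = 0$ and that the countable union exhausts $E$. Part (c) also needs a little care with the boundary values $m, a \in \{0,1\}$ and with the fact that $M$ need not be surjective when $\rho$ has unbounded support, but this is routine once the fundamental equivalence and the atom-versus-jump dictionary are in place.
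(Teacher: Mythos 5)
Your approach is essentially the paper's: the paper writes out only (d) and (e) in detail (citing \cite{BolleyBrenierLoeper} for (a) and leaving (b), (c) to the reader), and where it does give arguments they agree with yours up to cosmetics---it checks both directions of (d) by testing against $C_c$ functions rather than arguing directly with $L^1$ norms, and phrases (e) via the inclusion $(0,M(x))\subset\{m:M^{-1}(m)\le x\}\subset(0,M(x)]$, which is just your fundamental equivalence in disguise.

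The one place your sketch is not quite right is in (b). The parenthetical claims that $\rho$ ``vanishes on the closure'' of $E_q$ and that $\sup E_q$ ``cannot be an atom'' are both false as stated: take $\rho=\frac{1}{2}\delta_0+\frac{1}{2}\delta_1$ and $q=0$, so that $E_0=(0,1)$, $\sup E_0=1$ is an atom, and $\rho(\overline{E_0})=\rho([0,1])=1$. The conclusion $\rho(E_q)=0$ nevertheless holds, but the correct case split is: set $b=\sup E_q$; if $b\in E_q$ then $\rho(E_q)\le\rho((q,b])=0$ directly by the definition of $E_q$, while if $b\notin E_q$ then $E_q=(q,b)$ and $\rho(E_q)=\lim_{y\uparrow b}\rho((q,y])=0$. (Nothing at all is asserted about $q$ or about $b$ when $b\notin E_q$---both may carry mass.) With that repair the covering argument $E\subseteq\bigcup_{q\in\mathbb{Q}}E_q$ gives $\rho(E)=0$ as intended, and the rest of the proposal stands.
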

	
	\begin{proof}
		(a) is standard and the proof is omitted.  (A reader interested in the details can find one in \cite{BolleyBrenierLoeper}.) Details of (b) and (c) are left to the reader.  
		We give the short proofs of (d) and (e).
		
		\medskip
		(d) Fix $\psi\in C_c(\mathbb{R})$.  The following short computation establishes the first claim.
		\begin{align*} 
			\int_{\mathbb{R}} \psi(x) f_1(x) \rho(x)\dx 
			& = \int_{[0,1]} \psi(M^{-1}(m)) f_1(M^{-1}(m)) \dm \\
			& = \int_{[0,1]} \psi(M^{-1}(m)) f_2(M^{-1}(m)) \dm 
			= \int_{\mathbb{R}} \psi(x) f_2(x) \rho(x)\dx. 
		\end{align*} 
		If $M$ is continuous, then $M\circ M^{-1}$ is the identity map on $(0,1)$, so that for any $\eta\in C_c((0,1))$ we have (whenever $f_1 = f_2$ as elements of $L^1(\rho)$) that
		\[
		\int_{[0,1]} \eta(m) f_1 \circ M^{-1}(m)\dm 
		= \int_{\mathbb{R}} (\eta\circ M)f_1 \rho \dx 
		= \int_{\mathbb{R}} (\eta\circ M)f_2 \rho \dx 
		= \int_{[0,1]} \eta(m) f_2 \circ M^{-1}(m)\dm 
		\]
		We conclude that $f_1\circ M^{-1} = f_2\circ M^{-1}$ as elements of $L^1([0,1])$ in this case.
		
		(e) We note that
		\[
		(0,M(x)) \subset \{m\in (0,1]: M^{-1}(m)\le x\} \subset (0, M(x)].
		\]
		Therefore 
		\[
		\int_0^{M(x)} f\circ M^{-1}(m)\dm = \int_0^1 \big(f1_{(-\infty, x]})\circ M^{-1}(m) \dm = \int_\mathbb{R} (f1_{(-\infty,x]})(y) \rho(y)\dy = \int_{(-\infty, x]} f\rho \dy.
		\]
	\end{proof}


\begin{thebibliography}{10}
		
		\bibitem{BalleriniTopo2008}
		M.~Ballerini, N.~Cabibbo, R.~Candelier, A.~Cavagna, E.~Cisbani, I.~Giardina,
		V.~Lecomte, A.~Orlandi, G.~Parisi, A.~Procaccini, M.~Viale, and
		V.~Zdravkovic.
		\newblock {\em Interaction ruling animal collective behavior depends on topological
			rather than metric distance: Evidence from a field study.}
		\newblock Proceedings of the National Academy of Sciences,
		105(4):1232--1237, 2008.
		
		\bibitem{BBC-2003}
		K.~Bogdan, K.~Burdzy, and Z.-Q. Chen.
		\newblock {\em Censored stable processes.}
		\newblock Probab. Theory Related Fields, 127(1):89--152, 2003.
		
		\bibitem{BolleyBrenierLoeper}
		F.~Bolley, Y.~Brenier, and G.~Loeper.
		\newblock {\em Contractive metrics for scalar conservation laws.}
		\newblock  J. Hyperbolic Differ. Equ., 2(1):91--107, 2005.
		
		\bibitem{CCTT2016}
		J.~A. Carrillo, Y.-P. Choi, E.~Tadmor, and C.~Tan.
		\newblock {\em Critical thresholds in 1{D} {E}uler equations with non-local forces.}
		\newblock Math. Models Methods Appl. Sci., 26(1):185--206, 2016.
		
		\bibitem{ChoiZhang2021}
		Y.-P. Choi and X.~Zhang.
		\newblock {\em One dimensional singular {C}ucker-{S}male model: uniform-in-time
			mean-field limit and contractivity.}
		\newblock J. Differential Equations, 287:428--459, 2021.
		
		\bibitem{CD-16}
		F.~Cucker and J.-G. Dong.
		\newblock {\em On flocks influenced by closest neighbors.}
		\newblock Math. Models Methods Appl. Sci., 26(14):2685--2708, 2016.
		
		\bibitem{CS2007a}
		F.~Cucker and S.~Smale.
		\newblock {\em Emergent behavior in flocks.}
		\newblock IEEE Trans. Automat. Control, 52(5):852--862, 2007.
		
		\bibitem{CS2007b}
		F.~Cucker and S.~Smale.
		\newblock {\em On the mathematics of emergence.}
		\newblock Jpn. J. Math., 2(1):197--227, 2007.
		
		\bibitem{SobolevHitchhiker}
		E.~Di~Nezza, G.~Palatucci, and E.~Valdinoci.
		\newblock {\em Hitchhiker's guide to the fractional {S}obolev spaces.}
		\newblock Bull. Sci. Math., 136(5):521--573, 2012.
		
		\bibitem{DKRT}
		T.~Do, A.~Kiselev, L.~Ryzhik, and C.~Tan.
		\newblock {\em Global regularity for the fractional {E}uler {A}lignment system.}
		\newblock Arch. Ration. Mech. Anal., 228(1):1--37, 2018.
		
		\bibitem{FP23}
		M.~Fabisiak and J.~Peszek.
		\newblock {\em Inevitable monokineticity of strongly singular alignment.}
		\newblock Math. Ann., 390(1):589--637, 2024.
		
		\bibitem{FigalliKang2019}
		A.~Figalli and M.-J. Kang.
		\newblock {\em A rigorous derivation from the kinetic {C}ucker-{S}male model to the
			pressureless {E}uler system with nonlocal alignment.}
		\newblock Anal. PDE, 12(3):843--866, 2019.
		
		\bibitem{GalWarma2016}
		C.~G. Gal and M.~Warma.
		\newblock {\em Reaction-diffusion equations with fractional diffusion on non-smooth
			domains with various boundary conditions.}
		\newblock Discrete Contin. Dyn. Syst., 36(3):1279--1319, 2016.
		
		\bibitem{HaKimParkZhang2019}
		S.-Y. Ha, J.~Kim, J.~Park, and X.~Zhang.
		\newblock {\em Complete cluster predictability of the {C}ucker-{S}male flocking
			model on the real line.}
		\newblock Arch. Ration. Mech. Anal., 231(1):319--365, 2019.
		
		\bibitem{HaLiu2008}
		S.-Y. Ha and J.-G. Liu.
		\newblock {\em A simple proof of the {C}ucker-{S}male flocking dynamics and
			mean-field limit.}
		\newblock Commun. Math. Sci., 7(2):297--325, 2009.
		
		\bibitem{HT2008}
		S.-Y. Ha and E.~Tadmor.
		\newblock {\em From particle to kinetic and hydrodynamic descriptions of flocking.}
		\newblock Kinet. Relat. Models, 1(3):415--435, 2008.
		
		\bibitem{KMT2015}
		T.~K. Karper, A.~Mellet, and K.~Trivisa.
		\newblock {\em Hydrodynamic limit of the kinetic {C}ucker-{S}male flocking model.}
		\newblock Math. Models Methods Appl. Sci., 25(1):131--163, 2015.
		
		\bibitem{K-21}
		J.~Kim.
		\newblock {\em First-order reduction and emergent behavior of the one-dimensional
			kinetic {C}ucker-{S}male equation.}
		\newblock J. Differ. Equations, 302:496--532, 2021.
		
		\bibitem{Kruzkov1970}
		S.~N. Kru\v{z}kov.
		\newblock {\em First order quasilinear equations with several independent variables.}
		\newblock Mat. Sb. (N.S.), 81(123):228--255, 1970.
		
		\bibitem{LLST2020geometric}
		D.~Lear, T.~M. Leslie, R.~Shvydkoy, and E.~Tadmor.
		\newblock {\em Geometric structure of mass concentration sets for pressureless
			{E}uler alignment systems.}
		\newblock Adv. Math., 401:Paper No. 108290, 30, 2022.
		
		\bibitem{L2019CTC}
		T.~M. Leslie.
		\newblock {\em On the {L}agrangian trajectories for the one-dimensional {E}uler
			alignment model without vacuum velocity.}
		\newblock Comptes Rendus. Math\'ematique, 358(4):421--433, 2020.
		
		\bibitem{MUCHAMASA1}
		M.~Lewicka and P.~B. Mucha.
		\newblock {\em On temporal asymptotics for the {$p$}th power viscous reactive gas.}
		\newblock Nonlinear Anal., 57(7-8):951--969, 2004.
		
		\bibitem{MinMuchPe2020}
		P.~Minakowski, P.~B. Mucha, and J.~Peszek.
		\newblock {\em Density-induced {Consensus} {Protocol}.}
		\newblock Mathematical Models and Methods in Applied Sciences,
		30(12):2389--2415, Nov. 2020.
		\newblock arXiv: 2001.11550.
		
		\bibitem{MPT2019}
		J.~Morales, J.~Peszek, and E.~Tadmor.
		\newblock {\em Flocking {W}ith {S}hort-{R}ange {I}nteractions.}
		\newblock J. Stat. Phys., 176(2):382--397, 2019.
		
		\bibitem{MT2014}
		S.~Motsch and E.~Tadmor.
		\newblock {\em Heterophilious dynamics enhances consensus.}
		\newblock SIAM Rev., 56(4):577--621, 2014.
		
		\bibitem{MUCHAMASA2}
		P.~B. Mucha.
		\newblock {\em Compressible {N}avier-{S}tokes system in 1-{D}.}
		\newblock Math. Methods Appl. Sci., 24(9):607--622, 2001.
		
		\bibitem{fuzzy}
		P.~B. Mucha and J.~Peszek.
		\newblock {\em A fuzzy {$q$}-closest alignment model.}
		\newblock Nonlinearity, 37(8):Paper No. 085007, 21, 2024.
		
		\bibitem{PeszekPoyato2022kinetic}
		J.~Peszek and D.~Poyato.
		\newblock {\em Measure solutions to a kinetic cucker-smale model with singular and
			matrix-valued communication}, arXiv:2207.14764, 2022.
		
		\bibitem{PP-23}
		J.~Peszek and D.~Poyato.
		\newblock {\em Heterogeneous gradient flows in the topology of fibered optimal
			transport.}
		\newblock Calc. Var. Partial Differential Equations, 62(9):Paper No. 258,
		72, 2023.
		
		\bibitem{ReynoldsShvydkoy2020}
		D.~N. Reynolds and R.~Shvydkoy.
		\newblock {\em Local well-posedness of the topological {E}uler alignment models of
			collective behavior.}
		\newblock Nonlinearity, 33(10):5176--5214, 2020.
		
		\bibitem{ShvydkoyBook}
		R.~Shvydkoy.
		\newblock {D}ynamics and {A}nalysis of {A}lignment {M}odels of
		{C}ollective {B}ehavior, volume~4 of Ne\v{c}as Center Series.
		\newblock Birkh\"{a}user Basel, 2021.
		
		\bibitem{ShvydkoySurvey}
		R.~Shvydkoy.
		\newblock {\em Environmental averaging.}
		\newblock EMS Surv. Math. Sci., 11(2):277--413, 2024.
		
		\bibitem{ShvydkoyTadmorI}
		R.~Shvydkoy and E.~Tadmor.
		\newblock {\em Eulerian dynamics with a commutator forcing.}
		\newblock Transactions of Mathematics and Its Applications, 1(1), 2017.
		
		\bibitem{STtopo}
		R.~Shvydkoy and E.~Tadmor.
		\newblock {\em Topologically based fractional diffusion and emergent dynamics with
			short-range interactions.}
		\newblock SIAM Journal on Mathematical Analysis, 52(6):5792--5839, 2020.
		
		
		\bibitem{T-21}
		E.~Tadmor.
		\newblock {\em On the mathematics of swarming: emergent behavior in alignment
			dynamics.}
		\newblock Notices Amer. Math. Soc., 68(4):493--503, 2021.
		
		\bibitem{T-23}
		E.~Tadmor.
		\newblock {\em Swarming: hydrodynamic alignment with pressure.}
		\newblock Bull. Amer. Math. Soc. (N.S.), 60(3):285--325, 2023.
		
		\bibitem{TT2014}
		E.~Tadmor and C.~Tan.
		\newblock {\em Critical thresholds in flocking hydrodynamics with non-local
			alignment.}
		\newblock Philos. Trans. R. Soc. Lond. Ser. A Math. Phys. Eng. Sci.,
		372:20130401, 2014.
		
	\end{thebibliography}

	\def\cprime{$'$}

\end{document}